\NewDocumentCommand{\tens}{t_}
 {%
  \IfBooleanTF{#1}
   {\tensop}
   {\otimes}%
 }
\NewDocumentCommand{\tensop}{m}
 {%
  \mathbin{\mathop{\otimes}\displaylimits_{#1}}%
 }
\DeclareMathOperator{\Spec}{Spec}
\newcommand{\prtt}[1]{\left( #1 \right)}
\newcommand{\tA}[2]{#1 \tens_{R} #2}
\newcommand{\lips}[2]{\tA{#1}{#2}-\tA{#2}{#1}}
\newcommand{\bA}{{\mathbb{A}}}
\newcommand{\bC}{{\mathbb{C}}}
\newcommand{\bN}{{\mathbb{N}}}
\newcommand{\bZ}{{\mathbb{Z}}}
\newcommand{\sub}{\subseteq}
\newcommand{\ten}{\otimes}
\newcommand{\fp}{\mathfrak{p}}
\newcommand{\fq}{\mathfrak{q}}
\newcommand{\bx}{\mathbf{x}}
\newcommand{\id}{\mbox{id}}
\newcommand{\spec}{\mbox{Spec}}
\newcommand{\Frac}{\mbox{Frac}}
\theoremstyle{plain}
\newtheorem{defi}{Definition}[section]
\numberwithin{defi}{section}
\newtheorem{prop}[defi]{Proposition}
\newtheorem{teo}[defi]{Theorem}
\newtheorem{cor}[defi]{Corollary}
\newtheorem{lema}[defi]{Lemma}
\newtheorem{example}[defi]{Example}
\begin{document}
	
		\title{A survey on relative Lipschitz saturation of algebras and its relation with radicial algebras}
	\author{Thiago da Silva and Guilherme Schultz Netto}
	\date{}
	
	\maketitle
	
	\begin{abstract}
		In this work, we introduce the concept of relative Lipschitz saturation, along with its key categorical and algebraic properties, and demonstrate how such a structure always gives rise to a radicial algebra.	
	\end{abstract}

	\section*{Introduction}
	
	The study in equisingularity was started by Zariski in \cite{zariski1965}, where he was interested in investigating this concept in an algebraic variety along an irreducible (singular) subvariety. First, he dealt with different ways to define equivalent singularities of plane algebroid curves, and after in \cite{zariski1965-2} Zariski worked with algebroid hypersurfaces with a singular point at its origin.
	
	In the meantime, in \cite{zariski1968} introduced an operation on a ring $A$, which he called \textit{saturation}, which consists of passing from $A$ to some ring $\tilde{A}$ lying over $A$ and under the integral closure of $A$ in its total ring of fractions. In that work Zariski showed how useful this operation is for the theory of singularities by means of geometric applications to plane algebroid curves, and more generally, to algebroid hypersurfaces.
	
	Having its applications in his mind, firstly Zariski restricted himself to the case in which $A$ is a local domain of dimension one, and always with the assumption that the base field is algebraically closed and of characteristic zero. Then, in \cite{zariski1971(0), zariski1971, zariski1975} he presented his general theory of saturation, extending several results and showing how to apply them to a more general setup in the theory of singularities.
	
	The core of saturation theory developed by Zariski is to look for a special intermediate algebra between a ring $A$ and its integral closure $\overline{A}$. In \cite{phamt}, in the case of complex analytic algebras, Pham and Teissier observed that the germs of Lipschitz meromorphic functions lie between $A$ and $\overline{A}$. Thus, they studied this algebra from a formal and geometric viewpoint, showing that it coincides with the Zariski saturation in the hypersurface case. So, for a reduced complex analytic algebra $A$ with normalization $\overline{A}$, Pham and Teissier defined the Lipschitz saturation of $A$ as $$A^*:=\{f\in\overline{A}\mid f\ten_{\bC}1-1\ten_{\bC}f\in\overline{I_A}\},$$
	
	\noindent where $I_A$ denotes the kernel of the canonical map $\overline{A}\ten_{\bC}\overline{A}\rightarrow \overline{A}\ten_A\overline{A}$. Although Pham and Teissier were thinking with an analytic background, they left a good question about if $A^*_B\sub \overline{A}$, in the case where $B$ is an $A$-algebra and $$A^*_B:=\{f\in B\mid f\ten_{\bC}1-1\ten_{\bC}f\in\overline{I_{A, B}}\},$$
	
	\noindent where $I_{A,B}$ now denotes the kernel of the canonical map $B\ten_{\bC}B\rightarrow B\ten_AB$. 
	
	So, in \cite{lipman1} Lipman extended this definition for a sequence of ring morphisms $R\rightarrow A\overset{g}{\rightarrow}B$, and defined what he called the \textit{relative Lipschitz saturation of $A$ in $B$}, denoted by $A^*_{B, R}$. Besides,  Lipman developed several techniques and general properties on this operation in the ring $A$.
	
	Recently, Gaffney used this machinery to deal with bi-Lipschitz equisingularity of families of curves in \cite{gaffney1}, defining a notion of Lipschitz saturation for an ideal of a complex analytic algebra. After that, Gaffney showed this Lipschitz saturation is related to the integral closure of the \textit{double} of the ideal, a concept that he defined and used to get a type of infinitesimal Lipschitz condition for a family of complex analytic hypersurfaces in \cite{gaffney2}.
	
	The aim of this work is to introduce the concept of relative Lipschitz saturation and demonstrate that such a construction always results in a radicial algebra. Another objective of this work is to enhance the understanding of the presented results by providing additional details and auxiliary lemmas to support the central arguments of the main proofs.
	
	In the first section, we provide the basic definitions and the most immediate results, along with the presentation of some classic examples.
	
	In the second section, we present the main results regarding the categorical and algebraic properties of Lipschitz saturation. Finally, in the third section, we demonstrate how Lipschitz saturation serves as a source of radicial algebras.

	\section{Basic properties} 
	
	Let $R$ be a ring and let $A,B$ be $R$-modules and consider a sequence of ring morphisms 
	\begin{align*}
		R \overset{\tau}{\longrightarrow} A \overset{g}{\longrightarrow} B.
	\end{align*} Consider the map from $B$ to its tensor product by $R$, in a diagonal way:
	\begin{align*}
		\Delta : B &\longrightarrow \tA{B}{B} \\
		b &\longmapsto \lips{b}{1}.
	\end{align*}
	
	It is easy to conclude the following properties of $\Delta$:
	
	\begin{enumerate}
		\item $\Delta(b_1+b_2)=\Delta(b_1+b_2), \forall b_1,b_2\in B$;
		
		\item $\Delta(rb)=r\Delta(b), \forall r\in R$ and $b\in B$;
		
		\item (Leibniz rule) $\Delta(b_1b_2)=(b_1\ten_R 1)\Delta(b_2)+(1\ten_Rb_2)\Delta(b_1), \forall b_1,b_2\in B$. 
	\end{enumerate}
	
	By the universal property of the tensor product, there exists a unique $R$-algebras morphism $$\varphi: \tA{B}{B} \longrightarrow B\tens_{A}B$$
	
	\noindent which maps $x\otimes_R y\mapsto x\otimes_A y$, for all $x,y\in B$.
	
	Furthermore, we already know (see \cite{kleiman}, 8.7) that $$\ker\varphi=\langle ax\ten_R y-x\ten_Ray\mid a\in A\mbox{ and }x,y\in B \rangle.$$
	
	Notice that $ax\ten_R y-x\ten_Ray=(x\ten_Ry)(g(a)\ten_R1-1\ten_Rg(a))$. Therefore,$$\ker\varphi=\langle g(a)\ten_R1-1\ten_Rg(a)\mid a\in A\rangle=\Delta(g(A))(B\ten_RB),$$
	
	\noindent i.e, $\ker\varphi$ is the ideal of $B\ten_RB$ generated by the image of $\Delta\circ g$.
	
	\begin{defi}
		The Lipschitz saturation of $A$ in $B$ relative to $R \overset{\tau}{\rightarrow} A \overset{g}{\rightarrow} B$ is the set
		\begin{align*}
			A^*_{B,R}:=A^* := \left\{x \in B \mid \Delta(x) \in \overline{\ker \varphi}\right\}.
		\end{align*}
	\end{defi}
	If $A^* = g(A)$ then $A$ is said to be saturated in $B$. One important particular case is when $A$ is an $R$-subalgebra of $B$, taking $g$ as the inclusion. Let us see the first properties of the Lipschitz saturation.
	
	\begin{prop}[\cite{lipman1}]\label{prop_satsubanel}
		$A^*$ is an $R$-subalgebra of $B$ which contains $g(A)$.
	\end{prop}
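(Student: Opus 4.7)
The plan is to verify directly each of the requirements for being an $R$-subalgebra, exploiting the three formal properties of $\Delta$ listed just before the definition together with the standard fact that the integral closure $\overline{I}$ of an ideal $I$ in a commutative ring is itself an ideal containing $I$. All of the real work has already been done in setting up $\Delta$ and identifying $\ker\varphi$; the proof is a short bookkeeping argument.

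First I would check the containment $g(A)\subseteq A^*$. For $a\in A$, the computation carried out just before the definition shows that $\Delta(g(a))=g(a)\tens_R 1-1\tens_R g(a)$ is one of the generators of $\ker\varphi$, hence lies in $\ker\varphi\subseteq\overline{\ker\varphi}$. In particular $1=g(1)\in g(A)\subseteq A^*$, so $A^*$ is nonempty and contains the multiplicative identity.

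Next I would verify closure under the $R$-module operations. Given $x,y\in A^*$, property (1) of $\Delta$ (additivity, despite the typo in the displayed formula) yields $\Delta(x+y)=\Delta(x)+\Delta(y)$, and since $\overline{\ker\varphi}$ is closed under addition we get $x+y\in A^*$. Similarly, property (2) gives $\Delta(rx)=r\Delta(x)$ for $r\in R$; because $\overline{\ker\varphi}$ is an ideal of $B\tens_R B$, it is stable under multiplication by elements of the form $r\tens_R 1$ (or equivalently by the image of $R$), so $rx\in A^*$.

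Finally, closure under products follows from the Leibniz rule (property (3)): for $x,y\in A^*$,
\begin{align*}
\Delta(xy)=(x\tens_R 1)\Delta(y)+(1\tens_R y)\Delta(x).
\end{align*}
Both summands lie in $\overline{\ker\varphi}$ because the latter is an ideal of $B\tens_R B$ and $\Delta(x),\Delta(y)\in\overline{\ker\varphi}$. Hence $xy\in A^*$. The only subtle point (though hardly an obstacle) is to invoke the fact that the integral closure of an ideal is again an ideal; this is what allows all three closure arguments to go through uniformly.
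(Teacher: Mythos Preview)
Your proof is correct and follows essentially the same route as the paper's own argument: both verify $g(A)\subseteq A^*$ directly, then use additivity, $R$-linearity, and the Leibniz rule for $\Delta$ together with the fact that $\overline{\ker\varphi}$ is an ideal to establish closure under sums, scalar multiples, and products. Your write-up is slightly more explicit about why the integral closure being an ideal is the crucial ingredient, but the content is the same.
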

	
	\begin{proof}
		
		Clearly $g(A)\sub A^*\sub B$. Let $x,y \in A^*$ and $r\in R$. So, $\Delta(x),\Delta(y)\in\overline{\ker\varphi}$. Then: $$\Delta(x+y)=\Delta(x)+\Delta(y)\in\overline{\ker\varphi}.$$
		
		Thus, $x+y\in A^*$. For the product, notice that $$\Delta(xy)=(x\ten_R1)\Delta(y)+(1\ten_Ry)\Delta(x)\in\overline{\ker\varphi}.$$
		
		Hence $xy\in A^*$. Finally, $$\Delta(rx)=r\Delta(x)=g(\tau(r))\Delta(x)\in\overline{\ker\varphi},$$
		
		\noindent and therefore, $rx\in A^*$.
	\end{proof}
	
	\begin{prop}[\cite{lipman1}]\label{202309051604}
		Let $A$ be an $R$-subalgebra of $B$ and let $C$ be an $R$-subalgebra of $B$ containing $A$ as an $R$-subalgebra. \[\begin{tikzcd}
			R & A & C & B
			\arrow["\tau", from=1-1, to=1-2]
			\arrow[hook, from=1-2, to=1-3]
			\arrow[hook, from=1-3, to=1-4]
			\arrow["g"', curve={height=12pt}, hook, from=1-2, to=1-4]
		\end{tikzcd}\]

		If $C^*=C^*_{B,R}$ then $A^* \subseteq C^*$.
	\end{prop}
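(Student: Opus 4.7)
The plan is to compare the two kernels $\ker\varphi_A$ and $\ker\varphi_C$ associated with the canonical maps $\varphi_A:B\ten_R B\to B\ten_A B$ and $\varphi_C:B\ten_R B\to B\ten_C B$, and then pass to their integral closures inside $B\ten_R B$.

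First, I would use the explicit description of the kernels given right before the definition of $A^*$: namely,
\[
\ker\varphi_A=\langle a\ten_R 1-1\ten_R a \mid a\in A\rangle, \qquad \ker\varphi_C=\langle c\ten_R 1-1\ten_R c \mid c\in C\rangle,
\]
viewed as ideals of $B\ten_R B$ (here the maps $g$ for $A$ and for $C$ are both inclusions into $B$, so they disappear from the notation). Since $A\subseteq C$ as $R$-subalgebras of $B$, every generator of $\ker\varphi_A$ is literally among the generators of $\ker\varphi_C$, so we get the inclusion of ideals $\ker\varphi_A\subseteq\ker\varphi_C$.

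Next, I would invoke the standard monotonicity of integral closure of ideals in a fixed ring: if $I\subseteq J$ are ideals of a ring $S$, then $\overline{I}\subseteq\overline{J}$, since any equation of integral dependence of $z\in\overline{I}$ over $I$ is automatically an equation of integral dependence over $J$. Applying this with $S=B\ten_R B$, $I=\ker\varphi_A$ and $J=\ker\varphi_C$ yields $\overline{\ker\varphi_A}\subseteq\overline{\ker\varphi_C}$.

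Finally, I would close the argument: take $x\in A^*$, so by definition $\Delta(x)\in\overline{\ker\varphi_A}$; by the previous inclusion $\Delta(x)\in\overline{\ker\varphi_C}$, i.e. $x\in C^*$. I do not foresee any real obstacle here — the statement is essentially formal once one has identified the two kernels explicitly and noted the monotonicity of integral closure; the only minor point worth making explicit is that the inclusion $\ker\varphi_A\subseteq\ker\varphi_C$ is between ideals of the \emph{same} ring $B\ten_R B$, which is what licenses the use of monotonicity of integral closure without any additional change-of-ring considerations.
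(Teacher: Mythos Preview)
Your proof is correct and follows essentially the same approach as the paper: both arguments establish $\ker\varphi_A\subseteq\ker\varphi_C$ inside $B\ten_R B$, then pass to integral closures and conclude. The only cosmetic difference is that the paper obtains the kernel inclusion via the factorization $\varphi_C=\lambda\circ\varphi$ (with $\lambda:B\ten_A B\to B\ten_C B$ the canonical map), whereas you obtain it from the explicit generator description of the kernels.
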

	\begin{proof}
		Consider the diagram
		\begin{center}
			\begin{tikzcd}
				{B\underset{R}{\otimes}B} && {B\underset{A}{\otimes}B} \\
				& {B\underset{C}{\otimes}B}
				\arrow["\varphi", from=1-1, to=1-3]
				\arrow["{\varphi_C}"', from=1-1, to=2-2]
				\arrow["\lambda", from=1-3, to=2-2]
			\end{tikzcd}
		\end{center}
		\noindent where $\varphi_C$ e $\lambda$ are the canonical morphisms. It is easy to see this diagram commutes, so $\ker \varphi \subseteq \ker \varphi_C$, which implies $\overline{\ker\varphi}\sub\overline{\ker\varphi_C}$. Therefore, $A^*\sub C^*$.	
	\end{proof}
	
	\begin{cor}
		If $A$ is an $R$-subalgebra of $B$ then: 
		
		\begin{enumerate}
			\item $A \subseteq A^*$.
			
			\item $(A^*)^* = A^*$.
		\end{enumerate}
	\end{cor}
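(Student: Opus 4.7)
The plan is to derive both parts of the corollary from the two propositions already at our disposal, supplemented by the basic fact that the integral closure of an ideal is itself an ideal and is an idempotent operation.

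Part (1) is essentially immediate. Since $A$ is an $R$-subalgebra of $B$, the morphism $g$ is the inclusion, so $g(A)=A$. Proposition \ref{prop_satsubanel} then gives $A=g(A)\sub A^*$. With (1) in hand, the first inclusion $A^*\sub(A^*)^*$ in (2) follows from Proposition \ref{202309051604}. Indeed, $A^*$ is an $R$-subalgebra of $B$ (again by Proposition \ref{prop_satsubanel}) containing $A$, so taking $C:=A^*$ in that proposition yields $A^*\sub (A^*)^*_{B,R}$.

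The substantive content of the corollary is the reverse inclusion $(A^*)^*\sub A^*$, which is where I expect the only real work. Let $\varphi\colon \tA{B}{B}\to B\tens_A B$ and $\varphi'\colon \tA{B}{B}\to B\tens_{A^*}B$ be the canonical maps. From the description $\ker\varphi'=\Delta(A^*)(\tA{B}{B})$ (the ideal of $\tA{B}{B}$ generated by the image of $\Delta$ restricted to $A^*$, viewed as a subalgebra of $B$), together with the fact that by definition of $A^*$ every generator $\Delta(a)$ with $a\in A^*$ already lies in $\overline{\ker\varphi}$, I would conclude that $\ker\varphi'\sub\overline{\ker\varphi}$; the point is that $\overline{\ker\varphi}$ is an ideal, so it absorbs the ideal generated by any subset of itself. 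Invoking idempotence of integral closure, $\overline{\ker\varphi'}\sub\overline{\overline{\ker\varphi}}=\overline{\ker\varphi}$. For any $x\in(A^*)^*$, this forces $\Delta(x)\in\overline{\ker\varphi'}\sub\overline{\ker\varphi}$, so $x\in A^*$.

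The main obstacle is the step $\ker\varphi'\sub\overline{\ker\varphi}$: it rests on the (standard, but not previously invoked in the excerpt) facts that the integral closure of an ideal is again an ideal and is idempotent. Once these are acknowledged, the argument is a short chain of inclusions; no further calculation with tensor products or explicit integral equations is needed.
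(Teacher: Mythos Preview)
Your proposal is correct and follows essentially the same route as the paper: both argue that $\ker\varphi'\sub\overline{\ker\varphi}$ because the generators $\Delta(a)$ with $a\in A^*$ lie in $\overline{\ker\varphi}$, then use idempotence of integral closure to conclude $(A^*)^*\sub A^*$. The only cosmetic difference is that for the inclusion $A^*\sub(A^*)^*$ the paper simply reapplies part~(1) to the $R$-subalgebra $A^*$ of $B$, whereas you invoke Proposition~\ref{202309051604} with $C=A^*$; both are valid and equally short.
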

	
	\begin{proof}
		(1) In this case $g$ is the inclusion map, so $A=g(A)\sub A^*$.
		
		(2) Using (1), changing $A$ by $A^*$ (which is contained in $B$), we conclude that $A^* \subseteq (A^*)^*$. For the converse, consider the canonical morphism
		\begin{align*}
			\varphi^*:B\tens_{R} B \to B\otimes_{A^*}B.
		\end{align*}
		
		Notice that $\left\{\Delta(x)\mid x\in A^* \right\}$, and consequently $$\left<\left\{\Delta(x); x\in A^* \right\}\right> \subseteq \overline{\ker \varphi} \implies \ker \varphi^* \subseteq \overline{\ker \varphi}.$$
		
		\noindent Thus, $\overline{\ker \varphi^*} \subseteq \overline{\overline{\ker \varphi}} = \overline{\ker \varphi}$ which implies $\Delta^{-1}\prtt{\overline{\ker \varphi^*}} \subseteq \Delta^{-1}\prtt{\overline{\ker \varphi}}$. Hence, $$(A^*)^*\sub A^*.$$
	\end{proof}

	Here is the motivating example of relative saturation in the case of analytic complex varieties.

	\begin{example}\normalfont
		Following the approach of Pham-Teissier in \cite{phamt}, let $A$ be the local ring of an irreducible complex analytic space $X\sub\bC^n$ at the origin, and let $\overline{A}$ its normalization. Working with the canonical morphism $\varphi:\overline{A}\otimes_{\bC}\overline{A}\rightarrow \overline{A}\otimes_{A}\overline{A}$ (and with the analytic tensor product), Pham and Teissier defined the Lipschitz saturation of $A$ as the set $A^*$ of the elements $h\in \overline{A}$ such that $h\otimes 1-1\otimes h\in \overline{A}\otimes_{\bC}\overline{A}$ is in the integral closure of $\ker\varphi$. In our notation: $$A^*=\Delta^{-1}(\overline{\ker\varphi}),$$
		
		\noindent where $\Delta: \overline{A}\rightarrow \overline{A}\otimes_{\bC}\overline{A}$ is the diagonal map. 
		
		Now we present a connection between this notion and that of Lipschitz functions. If we choose generators $z_1,\hdots,z_n$ of the maximal ideal of the local ring then $\{\Delta(z_i)\}_{i=1}^n$ is a set of generators of $\ker\varphi$. One can choose $z_1,\hdots,z_n$ so that they are restrictions of coordinates on the ambient space $\bC^n$. Using the supremum criterion obtained by Jalabert and Teissier in \cite{lt}, for all $h\in\overline{A}$ one has:
		
		$$h\in A^*\iff \Delta(h)\in\overline{\ker\varphi}$$

		\noindent which is equivalent to the existence of some neighborhood $U$ of $(0,0)$ on $X\times X$ and a constant $C>0$ such that $$|\Delta(h)(z,z')|\leq C\sup\{|\Delta(z_i)(z,z')|\}_{i=1}^n, \forall (z,z')\in U.$$
		
		This last inequality is equivalent to $$|h(z)-h(z')|\leq C|z-z'|_{\mbox{\tiny{sup}}}, \forall (z,z')\in U,$$
		
		\noindent which is what is meant by the meromorphic function $h$ being Lipschitz at the origin in $X$.
	\end{example}
	
	Next, we see an algebraic version of the previous example.
	
	\begin{example}
		Let $k$ be an algebraically closed field, denote $\bA^n$ as the $n$-dimensional affine space over $k$, and let $V\sub \bA^n$ be an irreducible affine algebraic set. We recall some important notations: 
		
		\begin{itemize}
			\item $k[V]\rightarrow$ the coordinate ring of $V$ (which is a domain, once $V$ is irreducible);
			
			\item 	$k(V)\rightarrow$ field of fractions of $k[V]$;
			
			\item 
			$\overline{k[V]}\rightarrow$ integral closure of $k[V]$ in $k(V)$;
			
			\item For each $i\in\{1,\hdots,n\}$ we denote $\bx_i:V\sub\bA^n\rightarrow k$ as the projection onto the $i^{\tiny{\mbox{th}}}$-factor.
		\end{itemize}
		
		In this example we consider $$\begin{matrix}
			\Delta: & \overline{k[V]} & \rightarrow & \overline{k[V]}\ten_k\overline{k[V]}\\
			& f & \longmapsto & f\ten_k 1-1\ten_k f
		\end{matrix}$$ and the canonical map $$\begin{matrix}
			\varphi: & \overline{k[V]}\ten_k\overline{k[V]} & \longrightarrow & \overline{k[V]}\ten_{k[V]}\overline{k[V]}\\
		\end{matrix}.$$
		
		In this way, clearly $k[V]=k[\bx_1,\hdots,\bx_n]$. For all $i_1,\hdots,i_n\in\bZ_{\geq 0}$, using the Leibniz rule to get the equations $$\Delta(\bx_1^{i_1})=(\bx_1^{i_1-1}\ten_k 1)\Delta(\bx_1)+(1\ten_k\bx_1)\Delta(\bx_1^{i_1-1})$$
		
		$$\Delta(\bx_1^{i_1}\cdots \bx_{n}^{i_{n}})=((\bx_1^{i_1}\cdots \bx_{n-1}^{i_{n-1}})\ten_k 1)\Delta(\bx_n^{i_n})+(1\ten_k\bx_n^{i_n})\Delta(\bx_1^{i_1}\cdots \bx_{n-1}^{i_{n-1}})$$
		
		\noindent as inductive steps, one can see that $\Delta(\bx_1^{i_1}\cdots \bx_n^{i_n})\in\langle \Delta(\bx_i)\mid \forall i\in\{1,\hdots,n\}  \rangle$, for every $i_1,\hdots,i_n\in\bZ_{\geq 0}$. Since every $g\in k[V]$ is a polinomial at $\bx_1,\hdots,\bx_n$, one has $$\ker\varphi=\langle \Delta(\bx_i)\mid i\in\{1,\hdots,n\} \rangle.$$

		It is known that $k[V]\ten_kk[V]\cong k[V\times V]$ in a such way that for every $f,g\in k[V]$, the tensor product $f\ten_k g$ can be identified as the map $$\begin{matrix}
			f\ten_k g: & V\times V & \longrightarrow & k\\
			& (x,y)      &\longmapsto     & f(x)g(y)
		\end{matrix}.$$
		
		Thus, for each $i\in\{1,\hdots,n\}$ we have $$(\bx_i\ten_k1-1\ten_k\bx_i)(x_1,\hdots,x_n,y_1,\hdots,y_n)=x_i-y_i.$$
		
		Besides, we can conclude the Lipschitz saturation of $k[V]$ on $\overline{k[V]}$ is $$k[V]^*=\{f\in\overline{k[V]}\mid f\ten_k1-1\ten_kf\in\overline{\langle \bx_i\ten_k1-1\ten_k\bx_i \rangle_{i=1}^n}\}.$$		
	\end{example}

	\section{Some categorical results on the Lipschitz saturation}
	
	\begin{lema}\label{202305101554}
		Suppose that 
		\[\begin{tikzcd}
			A & B \\
			{A'} & {B'}
			\arrow["\alpha", from=1-1, to=1-2]
			\arrow["{\alpha'}", from=2-1, to=2-2]
			\arrow["\psi", from=1-2, to=2-2]
			\arrow["\phi"', from=1-1, to=2-1]
		\end{tikzcd}\]
		\noindent is a commutative diagram of ring morphisms. Then $$\phi(\overline{\ker\alpha})\sub\overline{\ker\alpha'}.$$ 
		
		If $\phi$ is surjective and $\psi$ is injective then the equality holds.
	\end{lema}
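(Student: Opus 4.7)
The plan is to handle the containment first and then address the equality under the extra hypotheses.

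For the containment $\phi(\overline{\ker\alpha})\sub\overline{\ker\alpha'}$, I would take $x\in\overline{\ker\alpha}$ and start from an integral equation $x^n+\sum_{i=1}^n a_i x^{n-i}=0$ with $a_i\in(\ker\alpha)^i$. Applying the ring homomorphism $\phi$ yields $\phi(x)^n+\sum_{i=1}^n \phi(a_i)\phi(x)^{n-i}=0$ in $A'$, so it only remains to check that $\phi(a_i)\in(\ker\alpha')^i$. Since $\phi$ is a ring homomorphism, this reduces to $\phi(\ker\alpha)\sub\ker\alpha'$, which is immediate from the commutativity of the square: for $a\in\ker\alpha$ one has $\alpha'(\phi(a))=\psi(\alpha(a))=\psi(0)=0$.

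For the equality under the extra hypotheses I would first extract two consequences. Injectivity of $\psi$ gives $\ker\alpha=\phi^{-1}(\ker\alpha')$ (in particular $\ker\phi\sub\ker\alpha$), since $\alpha'(\phi(a))=0$ forces $\psi(\alpha(a))=0$ and hence $\alpha(a)=0$. Combined with surjectivity of $\phi$, this yields surjections $(\ker\alpha)^i\twoheadrightarrow(\ker\alpha')^i$ for every $i\geq 1$: any generator $b'_1\cdots b'_i$ of $(\ker\alpha')^i$ lifts to $b_1\cdots b_i\in(\ker\alpha)^i$ by choosing $\phi$-preimages $b_j\in\phi^{-1}(\ker\alpha')=\ker\alpha$.

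Given $y'\in\overline{\ker\alpha'}$ with equation $(y')^n+\sum_{i=1}^n a'_i(y')^{n-i}=0$, $a'_i\in(\ker\alpha')^i$, I would choose a preimage $y\in A$ of $y'$ via $\phi$ and lift each $a'_i$ to some $a_i\in(\ker\alpha)^i$ using the surjection above. Then $w:=y^n+\sum_{i=1}^n a_i y^{n-i}$ satisfies $\phi(w)=0$, so $w\in\ker\phi\sub\ker\alpha$. The main obstacle I foresee is precisely here: $w$ lies in $\ker\alpha$ but not a priori in the higher power $(\ker\alpha)^n$, so one cannot merely replace $a_n$ by $a_n-w$ to get an integral equation for $y$ over $\ker\alpha$. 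Overcoming this is the technical heart of the argument. One route is to adjust the preimage $y$ by a carefully chosen element of $\ker\phi$ (still sent to $y'$ by $\phi$, since $\phi(\ker\phi)=0$) so that the resulting residue is absorbed into $(\ker\alpha)^n$; an alternative is to pass to the Rees-algebra description of integral closure of an ideal and use $\ker\phi\sub\ker\alpha\sub\overline{\ker\alpha}$ together with the idempotency $\overline{\overline{\ker\alpha}}=\overline{\ker\alpha}$ to absorb $w$. Either route should produce an element $\tilde y\in\overline{\ker\alpha}$ with $\phi(\tilde y)=y'$, completing the equality.
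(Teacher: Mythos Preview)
Your proof of the containment $\phi(\overline{\ker\alpha})\subseteq\overline{\ker\alpha'}$ is correct and coincides verbatim with the paper's argument.

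For the reverse inclusion you have put your finger on the real obstruction: after lifting, the residue $w=y^{n}+\sum a_{i}y^{\,n-i}$ lies only in $\ker\phi\subseteq\ker\alpha$, not in $(\ker\alpha)^{n}$. Neither of your proposed fixes can work, because the equality asserted in the lemma is in fact \emph{false}. Take $A=k[x,y]$, $A'=k[x,y]/(x^{2})$, $B=B'=k[x,y]/(x^{2},y^{2})$, with $\phi,\alpha,\alpha'$ the quotient maps and $\psi=\id_{B}$; then $\phi$ is surjective and $\psi$ is injective. Here $\ker\alpha=(x^{2},y^{2})$ has integral closure $(x,y)^{2}$ in $k[x,y]$, so $\phi(\overline{\ker\alpha})=(xy,y^{2})$ in $A'$. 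But in $A'$ the element $x$ is nilpotent ($x^{2}=0$), hence $x\in\overline{(0)}\subseteq\overline{(y^{2})}=\overline{\ker\alpha'}$, while $x\notin(xy,y^{2})$ (reduce modulo $y$). Thus $\phi(\overline{\ker\alpha})\subsetneq\overline{\ker\alpha'}$, and no adjustment of the lift $y$ or passage to Rees algebras can rescue the argument.

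The paper's own proof of this direction is equally defective: it restricts the codomain so that $\tilde\psi:B\to\psi(B)$ becomes an isomorphism and then asserts, ``analogously'', that $\phi^{-1}(\overline{\ker\tilde\alpha'})\subseteq\overline{\ker\alpha}$. No analogy is available---there is no ring map $A'\to A$ with which to reverse the roles---and the same counterexample (where $x\in\phi^{-1}(\overline{\ker\alpha'})$ yet $x\notin(x,y)^{2}=\overline{\ker\alpha}$) already refutes this intermediate claim. Fortunately, only the containment, never the equality, is invoked in the rest of the paper.
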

	
	\begin{proof}
		Since $\psi\circ\alpha=\alpha'\circ\phi$ then $\phi(\ker\alpha)\sub\ker\alpha'$. Let $v\in\phi(\overline{\ker\alpha})$. So there exists $u\in\overline{\ker\alpha}$ such that $v=\phi(u)$. Further, there are $a_1,\hdots,a_r\in A$ such that $$a_r+a_{r-1}u+\cdots+a_1u^{r-1}+u^r=0, \eqno{(\star)}$$
		
		\noindent where $a_i\in(\ker\alpha)^i, \forall i\in\{1,\hdots,r\}$. Thus, $b_i:=\phi(a_i)\in (\ker\alpha')^i$ and applying $\phi$ on $(\star)$ we obtain $$b_r+b_{r-1}v+\cdots+b_1v^{r-1}+v^r=0.$$
		
		\noindent Therefore, $v\in\overline{\ker\alpha'}$. 
		
		In the case where $\phi$ is surjective we have $\alpha'(A')\sub \psi(B)$, and if $\psi$ is injective then \[\begin{tikzcd}
			A & B \\
			{A'} & {\psi(B)}
			\arrow["\alpha", from=1-1, to=1-2]
			\arrow["{\tilde{\alpha'}}", from=2-1, to=2-2]
			\arrow["\tilde{\psi}", from=1-2, to=2-2]
			\arrow["\phi"', from=1-1, to=2-1]
		\end{tikzcd}\]
		
		\noindent is a commutative diagram, where $\tilde{\psi}$ is an isomorphism. Analogously, $\phi^{-1}(\overline{\ker\tilde{\alpha'}})\sub\overline{\ker\alpha}$ and therefore $\overline{\ker\alpha'}\sub\phi(\overline{\ker\alpha})$.
	\end{proof}

	\begin{prop} [\cite{lipman1}] \label{prop3}
		Suppose that
		\[\begin{tikzcd}
			R & A & B \\
			{R'} & {A'} & {B'}
			\arrow["\tau", from=1-1, to=1-2]
			\arrow["g", from=1-2, to=1-3]
			\arrow["{\tau'}", from=2-1, to=2-2]
			\arrow["{g'}", from=2-2, to=2-3]
			\arrow["{f_R}"', from=1-1, to=2-1]
			\arrow["{f}", from=1-3, to=2-3]
			\arrow["{f_A}", from=1-2, to=2-2]
		\end{tikzcd}\]
		
		\noindent is a commutative diagram of ring morphisms. Then
		$$f\prtt{A^*_{B,R}} \subseteq \prtt{A'}^*_{B',R'}.$$
	\end{prop}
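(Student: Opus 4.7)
My plan is to reduce the statement to Lemma \ref{202305101554} by constructing a commutative square of canonical maps between tensor products that relates $\varphi$ and $\varphi'$, and then use the naturality of the diagonal $\Delta$.

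First, I would build the ``horizontal'' maps on tensor products induced by $f$. Since the outer rectangle commutes, $f$ intertwines the $R$-module structure on $B$ (given by $g\circ\tau$) with the $R'$-module structure on $B'$ (given by $g'\circ\tau'$) via $f_R$; this makes the assignment $b_1\tens_R b_2\mapsto f(b_1)\tens_{R'} f(b_2)$ yield a well-defined ring morphism $\tilde f:B\tens_R B\to B'\tens_{R'}B'$. Similarly, the commutativity of the right square shows that $f$ is compatible with the actions of $A$ and $A'$ through $f_A$, so one also obtains a well-defined ring morphism $\tilde f_A:B\tens_A B\to B'\tens_{A'}B'$ sending $b_1\tens_A b_2\mapsto f(b_1)\tens_{A'} f(b_2)$.

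Next, I would check that the square
\[\begin{tikzcd}
B\tens_R B \arrow[r,"\varphi"]\arrow[d,"\tilde f"'] & B\tens_A B \arrow[d,"\tilde f_A"] \\
B'\tens_{R'}B' \arrow[r,"\varphi'"'] & B'\tens_{A'}B'
\end{tikzcd}\]
commutes, which is immediate by evaluating both compositions on a pure tensor $b_1\tens_R b_2$. Lemma \ref{202305101554} applied to this square then gives $\tilde f(\overline{\ker\varphi})\sub\overline{\ker\varphi'}$. Independently, a direct computation shows that the diagonal is natural in the sense that $\tilde f\circ\Delta=\Delta'\circ f$, since $\tilde f(b\tens_R1-1\tens_R b)=f(b)\tens_{R'}1-1\tens_{R'}f(b)$.

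Finally, for any $x\in A^*_{B,R}$ we have $\Delta(x)\in\overline{\ker\varphi}$, so
\[\Delta'(f(x))=\tilde f(\Delta(x))\in\tilde f(\overline{\ker\varphi})\sub\overline{\ker\varphi'},\]
which means $f(x)\in(A')^*_{B',R'}$, as desired. The only place that requires a little care is the first step, namely verifying that the induced maps $\tilde f$ and $\tilde f_A$ are well-defined, which is purely a bookkeeping consequence of the commutativity of the given diagram; once they are in place, the proof is a direct application of Lemma \ref{202305101554}.
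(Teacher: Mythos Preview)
Your proof is correct and follows essentially the same approach as the paper: both construct the induced maps on tensor products (your $\tilde f,\tilde f_A$ are the paper's $\phi,\psi$), verify that the resulting square with $\varphi,\varphi'$ commutes, apply Lemma~\ref{202305101554} to obtain $\tilde f(\overline{\ker\varphi})\sub\overline{\ker\varphi'}$, and conclude via the naturality relation $\tilde f\circ\Delta=\Delta'\circ f$. The only cosmetic difference is that the paper records the whole picture in a single $3\times 2$ diagram including $\Delta,\Delta'$ and $f$, whereas you split the commutativity checks into two pieces.
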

	
	\begin{proof}
		Consider the canonical morphism $\varphi':B'\tens_{R'}B'\rightarrow B'\tens_{A'}B'$ and \begin{align*}
			\Delta': B'&\longrightarrow B'\tens_{R'}B' \\ x' &\longmapsto x'\tens_{R'}1-1\tens_{R'}x'.
		\end{align*}

		The universal property of the tensor product guarantees the existence of morphisms $\phi$ and $\psi$ which maps 
		\[\begin{tikzcd}
			{b_1\underset{R}{\otimes}b_2} & {f(b_1)\underset{R'}{\otimes}f(b_2)} \\
			{b_1\underset{A}{\otimes}b_2} & {f(b_1)\underset{A'}{\otimes}f(b_2)}
			\arrow["\phi", maps to, from=1-1, to=1-2]
			\arrow["\psi", maps to, from=2-1, to=2-2]
		\end{tikzcd}\]
		
		\noindent and consequently, the diagram 
		
		\[\begin{tikzcd}
			B && {B\underset{R}{\otimes}B} && {B\underset{A}{\otimes}B} \\
			\\
			{B'} && {B'\underset{R'}{\otimes}B'} && {B'\underset{A'}{\otimes}B'}
			\arrow["\Delta", from=1-1, to=1-3]
			\arrow["\varphi", from=1-3, to=1-5]
			\arrow["{\Delta'}"', from=3-1, to=3-3]
			\arrow["{\varphi'}"', from=3-3, to=3-5]
			\arrow["f"', from=1-1, to=3-1]
			\arrow["\phi"', from=1-3, to=3-3]
			\arrow["\psi"', from=1-5, to=3-5]
		\end{tikzcd}\] \noindent commutes. Since $A^*_{B,R}=\Delta^{-1}(\overline{\ker\varphi})$ then $\Delta(A^*_{B,R})\sub\overline{\ker\varphi}\implies \phi(\Delta(A^*_{B,R}))\sub\phi(\overline{\ker\varphi})$. Now, Lemma \ref{202305101554} and the commutativeness of the above diagram imply that $\Delta'(f(A^*_{B,R}))\sub \overline{\ker\alpha'}$, and therefore $$f(A^*_{B,R})\sub\Delta'^{-1}(\overline{\ker\alpha'})=(A')^*_{B',R'}.$$ 
	\end{proof}

	Here we point out that Proposition \ref{202309051604} proved by Lipman is true even in the case where the sequence of rings $A\rightarrow C\rightarrow B$ is not necessarily a chain of subalgebras. 
	
	\begin{prop}
		If 
		$\begin{tikzcd}
			R & A & C & B
			\arrow["\tau", from=1-1, to=1-2]
			\arrow["\lambda", from=1-2, to=1-3]
			\arrow["{g_C}", from=1-3, to=1-4]
			\arrow["g"', curve={height=12pt}, from=1-2, to=1-4]
		\end{tikzcd}$ is a sequence of ring morphisms then $$A^*_{B,R}\sub C^*_{B,R}.$$
	\end{prop}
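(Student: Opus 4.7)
The plan is to mimic the proof of Proposition \ref{202309051604}, observing that the subalgebra hypothesis there was never essentially used: what really mattered was the existence of a canonical factorization map relating the two tensor products. Writing $\varphi_A:B\tens_R B\to B\tens_A B$ and $\varphi_C:B\tens_R B\to B\tens_C B$ for the canonical surjections, one has $A^*_{B,R}=\Delta^{-1}(\overline{\ker\varphi_A})$ and $C^*_{B,R}=\Delta^{-1}(\overline{\ker\varphi_C})$ with \emph{the same} diagonal map $\Delta:B\to B\tens_R B$. Thus it suffices to show $\ker\varphi_A\subseteq\ker\varphi_C$, because then $\overline{\ker\varphi_A}\subseteq\overline{\ker\varphi_C}$ and applying $\Delta^{-1}$ to both sides yields the desired inclusion.

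For the key step I would produce a commutative triangle
\[\begin{tikzcd}
B\tens_R B \ar[r,"\varphi_A"] \ar[dr,"\varphi_C"'] & B\tens_A B \ar[d,"\mu"] \\
& B\tens_C B
\end{tikzcd}\]
in which $\mu(b_1\tens_A b_2):=b_1\tens_C b_2$. The map $\mu$ is well defined by the universal property of $B\tens_A B$: since $B$ is an $A$-algebra via $g=g_C\circ\lambda$, the rule $(b_1,b_2)\mapsto b_1\tens_C b_2$ is $A$-bilinear, because any $a\in A$ acts through $g(a)=g_C(\lambda(a))\in g_C(C)$, and scalars in $g_C(C)$ cross the tensor symbol $\tens_C$ freely. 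Commutativity of the triangle is immediate on simple tensors, whence $\ker\varphi_A\subseteq\ker\varphi_C$.

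Alternatively, one can argue directly via generators: the computation preceding the definition of $A^*_{B,R}$ gives $\ker\varphi_A=\Delta(g(A))(B\tens_R B)$ and similarly $\ker\varphi_C=\Delta(g_C(C))(B\tens_R B)$. Since $g(A)=g_C(\lambda(A))\subseteq g_C(C)$, the generators of the first ideal lie among those of the second, yielding $\ker\varphi_A\subseteq\ker\varphi_C$ at once. I do not anticipate any genuine obstacle in this proof; it is essentially the same argument as Proposition \ref{202309051604}, and the content of the present statement is simply to record that the conclusion persists when $A$, $C$, and $B$ are linked only by arbitrary ring morphisms rather than by inclusions.
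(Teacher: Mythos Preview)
Your argument is correct. You directly establish the kernel inclusion $\ker\varphi_A\subseteq\ker\varphi_C$ via the factorization $\mu:B\tens_A B\to B\tens_C B$ (or, equivalently, via the generator description $\ker\varphi_A=\Delta(g(A))(B\tens_R B)\subseteq\Delta(g_C(C))(B\tens_R B)=\ker\varphi_C$), and then pull back through $\Delta^{-1}$ after taking integral closures. This is indeed the content of Proposition~\ref{202309051604} stripped of the subalgebra hypothesis.

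The paper proceeds differently: rather than reproving the kernel inclusion by hand, it invokes the functoriality result Proposition~\ref{prop3}. The sequence $R\to A\to C\to B$ yields a commutative ladder
\[\begin{tikzcd}
R \ar[r,"\tau"] \ar[d,"\id_R"'] & A \ar[r,"g"] \ar[d,"\lambda"] & B \ar[d,"\id_B"] \\
R \ar[r,"\tau_C"'] & C \ar[r,"g_C"'] & B
\end{tikzcd}\]
and Proposition~\ref{prop3} applied to this diagram gives $\id_B(A^*_{B,R})\subseteq C^*_{B,R}$ immediately. Your route is more elementary and self-contained; the paper's route is shorter because the diagram-chasing has already been packaged once and for all in Proposition~\ref{prop3}. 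Substantively the two arguments unwind to the same computation.
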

	
	\begin{proof}
		
		Indeed, the given sequence of ring morphisms induces the commutative diagram 
		$$\begin{tikzcd}
			R & A & B \\
			R & C & B
			\arrow["\tau", from=1-1, to=1-2]
			\arrow["g", from=1-2, to=1-3]
			\arrow["{\textrm{id}_B}", from=1-3, to=2-3]
			\arrow["{\textrm{id}_R}"', from=1-1, to=2-1]
			\arrow["{\tau_C}"', from=2-1, to=2-2]
			\arrow["\lambda" ', from=1-2, to=2-2]
			\arrow["{g_C}"', from=2-2, to=2-3]
		\end{tikzcd}.$$ Now it suffices to apply Proposition \ref{prop3} to get $A^*_{B,R}\sub C^*_{B,R}$.	
	\end{proof}

	Let us fix some notation before the next result. Let $I$ be a directed set. We denote a direct system over $I$ as $(A_{\bullet},\nu)$, which means that we have a collection $\{A_i\}_{i\in I}$ of rings and for every $i\leq j\in I$ we have a ring morphism $\nu_{ij}:A_i\rightarrow A_j$ satisfying the known conditions for a direct system.
	
	It is well known that the category of rings is complete and cocomplete. In particular, $(A_{\bullet},\nu)$ admits a direct limit $\varinjlim\limits_{I}A_{\bullet}=(A,\alpha)$, i.e, there exists a collection of ring morphisms $\{\alpha_i:A_i\rightarrow A \}$ satisfying the known conditions for a direct limit. This collection we denote as $\alpha_{\bullet}:A_{\bullet}\rightarrow A$.
	
	In the case of rings we can construct explicitly $\varinjlim\limits_{I}A_{\bullet}$ as the quotient $\dfrac{\bigoplus\limits_{i\in I}A_i}{S_{\nu}}$, where $S_{\nu}$ is the $\mathbb{Z}$-submodule of $\bigoplus\limits_{i\in I}A_i$ generated by sequences on the form $(x_i)_{i\in I}$ where $\nu_{ij}(x_i)=x_j$, for all $i\leq j\in I$. This limit can be endowed with a ring structure induced by $A_i, i\in I$, such that the canonical maps $\alpha_i:A_i\rightarrow A$ ($i\in I$) (which are formed by composing inclusion with the quotient map) satisfies the universal property for the direct limit.

	\begin{prop} [\cite{lipman1}]\label{202305112023}
		Let $(R_{\bullet},\mu)\overset{\tau_{\bullet}}{\rightarrow}(A_{\bullet},\nu)\overset{g_{\bullet}}{\rightarrow}(B_{\bullet},\theta)$ be a sequence of morphisms of direct systems of rings over a directed set $I$. Suppose that $\rho_{\bullet}:R_{\bullet}\rightarrow R$, $\alpha_{\bullet}:A_{\bullet}\rightarrow A$ and $\beta_{\bullet}:B_{\bullet}\rightarrow B$ are direct limits of $R_{\bullet}$, $A_{\bullet}$ and $B_{\bullet}$, respectively. It is well known that there exist ring morphisms $\tau:R\rightarrow A$ and $g:A\rightarrow B$ such that 
		\[\begin{tikzcd}
			{R_i} & {A_i} & {B_i} \\
			R & A & B
			\arrow["{\tau_i}", from=1-1, to=1-2]
			\arrow["{g_i}", from=1-2, to=1-3]
			\arrow["\tau", from=2-1, to=2-2]
			\arrow["g", from=2-2, to=2-3]
			\arrow["{\rho_i}"', from=1-1, to=2-1]
			\arrow["{\alpha_i}"', from=1-2, to=2-2]
			\arrow["{\beta_i}", from=1-3, to=2-3]
		\end{tikzcd}\]\noindent is a commutative diagram, $\forall i\in I$.
		
		\begin{enumerate}
			\item For all $i\leq j\in I$ the map $$\begin{matrix}
				\theta^*_{ij}: & (A_i)^*_{B_i,R_i} & \longrightarrow & (A_j)^*_{B_j,R_j}\\
				& z & \longmapsto & \theta_{ij}(z) 
			\end{matrix}$$\noindent is a well-defined morphism of rings.
			
			\item $((A_{\bullet})^*_{B_{\bullet}, R_{\bullet}},\theta^*)$ is a direct system of rings over $I$.
			
			\item $\varinjlim\limits_{I}(A_{\bullet})^*_{B_{\bullet}, R_{\bullet}}\cong A^*_{B,R}$.
		\end{enumerate}
		
	\end{prop}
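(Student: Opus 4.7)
Parts (1) and (2) follow directly from Proposition \ref{prop3}. Applying that proposition to the square with top row $R_i \overset{\tau_i}{\to} A_i \overset{g_i}{\to} B_i$, bottom row $R_j \overset{\tau_j}{\to} A_j \overset{g_j}{\to} B_j$, and vertical maps $\mu_{ij}, \nu_{ij}, \theta_{ij}$ gives $\theta_{ij}\bigl((A_i)^*_{B_i, R_i}\bigr) \subseteq (A_j)^*_{B_j, R_j}$, so $\theta^*_{ij}$ is well-defined and inherits the ring-homomorphism property from $\theta_{ij}$. The axioms $\theta^*_{ii} = \mathrm{id}$ and $\theta^*_{jk} \circ \theta^*_{ij} = \theta^*_{ik}$ then descend from the corresponding identities for $\theta_\bullet$.

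For (3), applying Proposition \ref{prop3} to the square whose top row is $R_i \to A_i \to B_i$ and whose bottom row is $R \to A \to B$, with vertical maps $\rho_i, \alpha_i, \beta_i$, yields $\beta_i\bigl((A_i)^*_{B_i, R_i}\bigr) \subseteq A^*_{B,R}$. The restrictions $\beta_i|_{(A_i)^*}$ form a compatible family with respect to the $\theta^*_{\bullet\bullet}$, so the universal property of the directed colimit produces a canonical ring homomorphism
\[
\Phi : \varinjlim_{I} (A_\bullet)^*_{B_\bullet, R_\bullet} \longrightarrow A^*_{B,R}.
\]
Injectivity of $\Phi$ is essentially automatic: the inclusions $(A_i)^*_{B_i, R_i} \hookrightarrow B_i$ form a morphism of direct systems, so by exactness of directed colimits of modules the induced map $\varinjlim (A_\bullet)^* \to \varinjlim B_\bullet = B$ is injective, and $\Phi$ factors through it. Concretely, if $[z_i] \in \ker \Phi$ then $\beta_i(z_i) = 0$, hence by the explicit description of the colimit there exists $j \geq i$ with $\theta_{ij}(z_i) = 0$, so $[z_i] = 0$.

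The main obstacle is surjectivity. Given $x \in A^*_{B,R}$, pick $i \in I$ and $b \in B_i$ with $\beta_i(b) = x$; the goal is to find $j \geq i$ with $\theta_{ij}(b) \in (A_j)^*_{B_j, R_j}$, i.e. $\Delta_j(\theta_{ij}(b)) \in \overline{\ker \varphi_j}$, where $\varphi_j : B_j \tens_{R_j} B_j \to B_j \tens_{A_j} B_j$. The key input is that tensor product commutes with directed colimits, giving
\[
\varinjlim_I (B_i \tens_{R_i} B_i) \cong B \tens_R B, \qquad \varinjlim_I (B_i \tens_{A_i} B_i) \cong B \tens_A B,
\]
and hence, by exactness, $\varinjlim \ker \varphi_i \cong \ker \varphi$. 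An integral equation exhibiting $\Delta(x) \in \overline{\ker \varphi}$ involves only finitely many elements of $\ker \varphi$; by directedness of $I$ one can lift all of them, together with $\Delta_i(b)$ itself, to a single stage $j \geq i$. The resulting candidate relation holds in the colimit, so by the standard fact that equalities in a directed colimit of rings already hold at some finite stage, after passing to a further index $j' \geq j$ it becomes a genuine integral equation inside $B_{j'} \tens_{R_{j'}} B_{j'}$ with coefficients in the appropriate powers of $\ker \varphi_{j'}$. This realizes $\theta_{ij'}(b)$ as an element of $(A_{j'})^*_{B_{j'}, R_{j'}}$ mapping to $x$ under $\Phi$, completing the proof.
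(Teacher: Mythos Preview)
Your argument for parts (1) and (2) is exactly the paper's: Proposition~\ref{prop3} applied to the transition squares, followed by inheriting the direct-system identities from $\theta_\bullet$.

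For part (3) the paper gives essentially no argument: it writes down the map
\[
\psi\bigl((x_i)_{i\in I}+S_{\theta^*}\bigr)=(\beta_i(x_i))_{i\in I}+S_{\theta}
\]
in the explicit direct-sum model of the colimit and declares it ``straightforward to check'' that this is an isomorphism onto $A^*_{B,R}\subseteq B=\varinjlim B_\bullet$. Your proof supplies precisely the content hidden behind that phrase. Injectivity via exactness of filtered colimits on the inclusions $(A_i)^*\hookrightarrow B_i$ is the natural argument. For surjectivity, the mechanism you use---$B\otimes_R B\cong\varinjlim(B_i\otimes_{R_i}B_i)$, hence $\ker\varphi\cong\varinjlim\ker\varphi_i$, then lifting the finitely many coefficients of an integral-dependence equation for $\Delta(x)$ to a common stage and using that an equality in a filtered colimit of rings already holds at some finite stage---is the standard and correct way to descend $x\in A^*_{B,R}$ to some $(A_{j'})^*_{B_{j'},R_{j'}}$. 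One small point worth making explicit in your write-up: when you lift $a_k\in(\ker\varphi)^k$, you should first write it as a finite sum of $k$-fold products of elements of $\ker\varphi$ and lift those factors individually, so that the lift visibly lands in $(\ker\varphi_j)^k$ and stays there after transition to $j'$; your sketch implicitly does this, but spelling it out removes any doubt. Otherwise your proof is correct and strictly more informative than the paper's.
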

	
	\begin{proof}
		(1) Since $\tau_{\bullet}$ and $g_{\bullet}$ are morphisms of direct systems, for $i\leq j\in I$ we have the following commutative diagram: 
		\[\begin{tikzcd}
			{R_i} & {A_i} & {B_i} \\
			{R_j} & {A_j} & {B_j}
			\arrow["{\tau_i}", from=1-1, to=1-2]
			\arrow["{g_i}", from=1-2, to=1-3]
			\arrow["{\tau_j}", from=2-1, to=2-2]
			\arrow["{g_j}", from=2-2, to=2-3]
			\arrow["{\mu_{ij}}"', from=1-1, to=2-1]
			\arrow["{\nu_{ij}}"', from=1-2, to=2-2]
			\arrow["{\theta_{ij}}", from=1-3, to=2-3]
		\end{tikzcd}\]
		
		\noindent Now, Proposition \ref{prop3} implies $\theta_{ij}((A_i)^*_{B_i,R_i})\sub (A_j)^*_{B_j,R_j}$. Hence, $\theta_{ij}^*$ is well-defined, and clearly it is a ring morphism.
		
		(2) (i) For each $i\in I$, since $(B_{\bullet},\theta)$ is a direct system then $\theta_{ii}=\id_{B_i}$. Hence, $\theta_{ii}^*=\id_{(A_i)^*_{B_i,R_i}}$.
		
		\noindent (ii) Let $i\leq j\leq k\in I$. Since $\theta_{ik}=\theta_{jk}\circ\theta_{ij}$ then $\theta^*_{ik}=\theta^*_{jk}\circ\theta^*_{ij}$.
		
		(3) Using the previous notation it is straightforward to check that the map $$\begin{matrix}
			\psi: & \varinjlim\limits_{I}(A_i)^*_{B_i,R_i} & \longrightarrow & A^*_{B, R}\\
			& (x_i)_{i\in I}+S_{\theta^*} & \longmapsto & (\beta_i(x_i))_{i\in I}+S_{\theta}
		\end{matrix}$$ \noindent is an isomorphism.

	\end{proof}

	\begin{prop} [\cite{lipman1}] Let $g_i:A_i \rightarrow B_i$ be $R$-algebra morphisms and let $g:A \rightarrow B$ be the direct product of those maps. That is,
		\begin{align*}
			A = \prtt{\prod\limits_{i=1}^{n}A_i} \overset{g = \prod\limits g_i}{\longrightarrow} \prtt{\prod\limits_{i=1}^{n}B_i} = B.
		\end{align*}
		Então,
		\begin{align*}
			A_{B,R}^* = \prod\limits_{i=1}^{n}\prtt{A_i}_{B_i,R}^*.
		\end{align*}
	\end{prop}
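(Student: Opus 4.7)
The plan is to reduce the computation to a component-wise analysis using two decompositions of tensor products, followed by verifying that integral closure is compatible with finite products of ideals.

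First, tensor product distributes over finite direct products of modules, giving a canonical isomorphism of $R$-algebras
\[
B\tens_R B \;\cong\; \prod_{i,j}\bigl(B_i\tens_R B_j\bigr),
\]
under which $b\tens_R b'$ corresponds to the tuple $(b_i\tens_R b'_j)_{i,j}$. In particular, for $x=(x_i)_i\in B$ the diagonal $\Delta(x)$ corresponds to $(x_i\tens_R 1-1\tens_R x_j)_{i,j}$. On the $A$-tensor side, the orthogonal idempotents $e_i=(0,\ldots,1,\ldots,0)\in A$ force the off-diagonal blocks to vanish: if $i\ne j$ and $b_i\in B_i$, $b_j\in B_j$, then $b_i\tens_A b_j=e_ib_i\tens_A b_j=b_i\tens_A e_ib_j=0$. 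Since the canonical surjection $A\twoheadrightarrow A_i$ identifies the $A$-action on $B_i$ with the $A_i$-action, one gets $B_i\tens_A B_i\cong B_i\tens_{A_i}B_i$, and hence
\[
B\tens_A B\;\cong\;\prod_i\bigl(B_i\tens_{A_i}B_i\bigr).
\]
Under these identifications, the canonical map $\varphi$ becomes $(c_{ij})_{i,j}\mapsto (\varphi_i(c_{ii}))_i$, where $\varphi_i\colon B_i\tens_R B_i\to B_i\tens_{A_i}B_i$ is the analogous map for the $i$-th factor. It follows that
\[
\ker\varphi=\prod_{i\ne j}(B_i\tens_R B_j)\;\times\;\prod_i\ker\varphi_i.
\]

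The next step is to show that integral closure commutes with finite products of ideals inside a finite product of rings. One inclusion is immediate: any monic integral relation in the product decomposes componentwise. For the converse, componentwise monic relations of varying degrees $n_{ij}$ can be padded to a common degree $n=\max\{n_{ij}\}$ by multiplying each relation by a suitable power of the element, and then assembled into a single monic polynomial in $(B\tens_R B)[X]$ whose coefficients lie in the appropriate powers of $\ker\varphi$. This yields
\[
\overline{\ker\varphi}=\prod_{i\ne j}(B_i\tens_R B_j)\;\times\;\prod_i\overline{\ker\varphi_i}.
\]

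Combining everything, $\Delta(x)\in\overline{\ker\varphi}$ holds if and only if, for every $i$, the diagonal component $x_i\tens_R 1-1\tens_R x_i=\Delta_i(x_i)$ lies in $\overline{\ker\varphi_i}$, the off-diagonal conditions being vacuous since those factors of $\overline{\ker\varphi}$ are the entire rings $B_i\tens_R B_j$. This is exactly the condition $x_i\in(A_i)^*_{B_i,R}$ for every $i$, yielding $A^*_{B,R}=\prod_{i=1}^n(A_i)^*_{B_i,R}$. I expect the main subtleties to be the vanishing of off-diagonal blocks in $B\tens_A B$ (which hinges on the presence of idempotents in a finite product, and would fail for infinite products) and the compatibility of integral closure with finite products of ideals; everything else is bookkeeping on the tensor-product isomorphisms.
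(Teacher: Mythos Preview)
Your proof is correct and follows essentially the same strategy as the paper: decompose $B\otimes_R B$ as $\prod_{i,j}(B_i\otimes_R B_j)$, identify $\ker\varphi$ componentwise, use that integral closure of ideals commutes with finite direct products, and read off the condition on $\Delta(x)$ factor by factor. The only cosmetic difference is that the paper packages the comparison of $\ker\varphi$ with $\ker\tilde\varphi$ via Lemma~\ref{202305101554} (using merely that the map $\prod_i(B_i\otimes_{A_i}B_i)\to B\otimes_A B$ is injective), whereas you compute $B\otimes_A B\cong\prod_i(B_i\otimes_{A_i}B_i)$ directly via the idempotents $e_i$; both routes arrive at the same identification.
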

	\begin{proof}
		For each $i\in \{1,\hdots,n\}$ consider the canonical morphism $\varphi_i: B_i\ten_R B_i \rightarrow B_i\ten_{A_i} B_i$ and the diagonal $\Delta_i: B_i\rightarrow B_i\ten_R B_i$. Let $\tilde{\varphi}$ be the composition of the morphisms 
		\[\begin{tikzcd}
			{\prod\limits_{i,j=1}^n(B_i\underset{R}{\otimes}B_j)} && {\prod\limits_{i=1}^n(B_i\underset{R}{\otimes}B_i)} && {\prod\limits_{i=1}^n(B_i\underset{A_i}{\otimes}B_i)}
			\arrow["{\textrm{\scriptsize{p=projection}}}", from=1-1, to=1-3]
			\arrow["{\prod\varphi_i}", from=1-3, to=1-5]
		\end{tikzcd}\]
		
		Thus, $\ker\tilde{\varphi}=\ker(\prod\varphi_i)\times \left(\prod\limits_{i\neq j=1}^n(B_i\underset{R}{\otimes}B_j)\right)$. Once the kernel and integral closure for ideals commute with a finite direct product, one has $$\overline{\ker\tilde{\varphi}}=\left(\prod\limits_{i=1}^n\overline{\ker\varphi_i}\right)\times \prod\limits_{i\neq j=1}^n(B_i\underset{R}{\otimes}B_j)=p^{-1}\left(\prod\limits_{i=1}^n\overline{\ker\varphi_i}\right).$$

		On the other hand, we have a commutative diagram 
		\[\begin{tikzcd}
			{\prod\limits_{i,j=1}^n(B_i\underset{R}{\otimes}B_j)} && {\prod\limits_{i=1}^n(B_i\underset{A_i}{\otimes}B_i)} \\
			\\
			{B\underset{R}{\otimes}B} && {B\underset{A}{\otimes}B}
			\arrow["{\tilde{\varphi}}", from=1-1, to=1-3]
			\arrow["\varphi", from=3-1, to=3-3]
			\arrow["\phi"', from=1-1, to=3-1]
			\arrow["\psi", from=1-3, to=3-3]
		\end{tikzcd}\]
		
		\noindent where $\phi$ is an isomorphism, $\psi$ is a canonical injective morphism and $p\circ\phi^{-1}\circ\Delta=\prod\Delta_i$. Hence, $\phi(\overline{\ker\tilde{\varphi}})=\overline{\ker\varphi}$. Therefore: 
		
		$$A^*_{B,R}=\Delta^{-1}(\overline{\ker\varphi})=\Delta^{-1}(\phi(\overline{\ker\tilde{\varphi}}))=\Delta^{-1}\left(\phi\left(p^{-1}\left(\prod\limits_{i=1}^n\overline{\ker\varphi_i}\right)\right)\right)$$
		
		$$=(p\circ\phi^{-1}\circ\Delta)^{-1}\left(\prod\limits_{i=1}^n\overline{\ker\varphi_i}\right)=\left(\prod\Delta_i\right)\left(\prod\limits_{i=1}^n\overline{\ker\varphi_i}\right)=\prod\limits_{i=1}^n\Delta_i^{-1}(\overline{\ker\varphi_i})=\prod\limits_{i=1}^n(A_i)^{*}_{B_i,R}.$$
	\end{proof}
	
	In the next proposition, we prove the base change for a flat algebra partially preserves Lipschitz saturation.
	
	\begin{prop}\label{202305171730}
		Let $R\overset{\tau}{\rightarrow} A \overset{g}{\rightarrow}B$ be a sequence of ring morphisms, let $R'$ be a flat $R$-algebra and consider the induced sequence 
		\[\begin{tikzcd}
			{R'\cong R\underset{R}{\otimes}R'} && {A\underset{R}{\otimes}R'} && {B\underset{R}{\otimes}R'}
			\arrow["{\tau'}", from=1-1, to=1-3]
			\arrow["{g'=g\underset{R}{\otimes}\textrm{id}_{R'}}", from=1-3, to=1-5]
		\end{tikzcd}\] Then: $$A^*_{B,R}\ten_RR'\sub(A\ten_RR')^*_{B\ten_RR',R'}.$$
	\end{prop}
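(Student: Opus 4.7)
The plan is to reduce the claim to Lemma \ref{202305101554} by producing two commutative squares: one relating the canonical maps $\varphi$ and $\varphi'$, and one relating the diagonals $\Delta$ and $\Delta'$. The hypothesis that $R'$ is $R$-flat is first used to make $A^*_{B,R}\ten_R R'\hookrightarrow B\ten_R R'$ an honest inclusion (since $A^*_{B,R}\hookrightarrow B$ is one), so that the stated containment makes sense inside $B\ten_R R'$.

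The first step is to identify $\varphi'$ with the base change $\varphi\ten_R\id_{R'}$, via the canonical isomorphisms $(B\ten_R R')\ten_{R'}(B\ten_R R')\cong (B\ten_R B)\ten_R R'$ and $(B\ten_R R')\ten_{A\ten_R R'}(B\ten_R R')\cong (B\ten_A B)\ten_R R'$ (these are purely formal and do not require flatness). Under these identifications, the natural maps $\phi$ and $\psi$ sending $u\mapsto u\ten 1$ yield a commutative square with $\varphi$ on top and $\varphi'$ on the bottom, and Lemma \ref{202305101554} then gives $\phi(\overline{\ker\varphi})\sub\overline{\ker\varphi'}$.

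Next I would verify the analogous diagonal compatibility square: with top row $\Delta:B\to B\ten_R B$, bottom row $\Delta':B\ten_R R'\to (B\ten_R R')\ten_{R'}(B\ten_R R')$, left vertical $b\mapsto b\ten 1$, and right vertical $\phi$, commutativity is an immediate calculation from the formula for $\Delta$. Consequently, for any $x\in A^*_{B,R}$ one has $\Delta'(x\ten 1)=\phi(\Delta(x))\in\phi(\overline{\ker\varphi})\sub\overline{\ker\varphi'}$, so $x\ten 1\in (A\ten_R R')^*_{B\ten_R R',R'}$.

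Finally, since $(A\ten_R R')^*_{B\ten_R R',R'}$ is an $R'$-subalgebra of $B\ten_R R'$ by Proposition \ref{prop_satsubanel}, it is stable under multiplication by elements of $R'$. Hence $x\ten r'=r'\cdot(x\ten 1)$ belongs to it for all $x\in A^*_{B,R}$ and $r'\in R'$, and taking $R'$-linear combinations yields the desired inclusion. The main obstacle I anticipate is the bookkeeping in the first step: verifying that the canonical base-change isomorphisms really intertwine $\varphi'$ with $\varphi\ten_R\id_{R'}$ requires careful tracking of how $A\ten_R R'$ acts on each of the two tensor factors, but once that identification is in place the rest follows from Lemma \ref{202305101554} and the subalgebra structure of the saturation.
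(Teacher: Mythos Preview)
Your argument is correct and follows essentially the same strategy as the paper: identify $\varphi'$ with $\varphi\ten_R\id_{R'}$ through the canonical base-change isomorphisms, then transfer membership in $\overline{\ker\varphi}$ to $\overline{\ker\varphi'}$. The organization differs in two minor but pleasant ways. First, you invoke Lemma~\ref{202305101554} directly on the square relating $\varphi$ and $\varphi'$, whereas the paper unwinds that transfer by hand through the chain $\overline{\ker\varphi}\ten_R R'\sub\overline{\ker\varphi\ten_R R'}\sub\overline{\ker(\varphi\ten_R\id_{R'})}$ and then pushes through the isomorphisms $\gamma,\phi$ explicitly. Second, you treat only pure tensors $x\ten 1$ and then appeal to Proposition~\ref{prop_satsubanel} (the saturation is an $R'$-subalgebra) to absorb the $R'$-factor, while the paper computes $\Delta'(x\ten_R y)$ for arbitrary $y\in R'$ directly. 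Your route is slightly more economical; the paper's has the virtue of making every inclusion visible. One small remark: you assert that $(B\ten_R R')\ten_{A\ten_R R'}(B\ten_R R')\cong (B\ten_A B)\ten_R R'$ is an isomorphism, which is true (and needs no flatness), but the paper only uses the existence of the canonical map $\psi$ in that direction, which already suffices for the argument.
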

	
	\begin{proof}
		Denote $A':=A\ten_RR'$, $B':=B\ten_RR'$, and consider the canonical maps 
		\[\begin{tikzcd}
			{B'} && {B'\underset{R'}{\otimes}B'} && {B'\underset{A'}{\otimes}B'}
			\arrow["{\Delta'}", from=1-1, to=1-3]
			\arrow["{\varphi'}", from=1-3, to=1-5]
		\end{tikzcd}\]
		
		We have $$B'\ten_{R'}B'=(B\ten_RR')\ten_{R'}(B\ten_RR')\cong B\ten_R(R'\ten_{R'}B)\ten_RR'$$$$\cong(B\ten_RB)\ten_RR'\cong B\ten_RB',$$
		
		\noindent and $(B\ten_AB)\ten_RR'\cong B\ten_A(B\ten_RR')=B\ten_AB'$ which maps canonically to $B'\ten_A B'$, and then to $B'\ten_{A'}B'$. Thus, there exists a commutative diagram 
		\[\begin{tikzcd}
			&& {B'} \\
			{B\underset{R}{\otimes}B'} && {(B\underset{R}{\otimes}B)\underset{R}{\otimes}R'} && {(B\underset{A}{\otimes}B)\underset{R}{\otimes}R'} \\
			{B'\underset{R'}{\otimes}B'} &&&& {B'\underset{A'}{\otimes}B'}
			\arrow["{\varphi'}", from=3-1, to=3-5]
			\arrow["\gamma", from=2-1, to=2-3]
			\arrow["{\varphi\underset{R}{\otimes}\textrm{id}_{R'}}", from=2-3, to=2-5]
			\arrow["{\Delta\underset{R}{\otimes}\textrm{id}_{R'}}", from=1-3, to=2-3]
			\arrow["\phi", from=2-1, to=3-1]
			\arrow["\psi", from=2-5, to=3-5]
			\arrow["{\phi^{-1}\circ\Delta'}"', from=1-3, to=2-1]
		\end{tikzcd}\]\noindent where $\gamma$ and $\phi$ are ring isomorphisms. 
		
		Let $x\in A^*_{B,R}$ and $y\in R'$. So, $\Delta(x)\in\overline{\ker\varphi}$ and consequently $$\gamma\circ\phi^{-1}\circ\Delta'(x\ten_Ry)=(\Delta\ten_R\id_{R'})(x\ten_Ry))=\Delta(x)\ten_Ry\in \overline{\ker\varphi}\ten_RR'.$$
		
		Since $\ker\varphi$ is an ideal of $B\ten_RB$ then $\overline{\ker\varphi}\ten_RR'\sub \overline{\ker\varphi\ten_RR'}$, which is contained in $\overline{\ker(\varphi\ten_R\id_{R'})}$. In particular, $\Delta'(x\ten_Ry)\in\phi(\gamma^{-1}(\overline{\ker(\varphi\ten_R\id_{R'})}))$. Once $\gamma$ is an isomorphism, we have $$\gamma^{-1}(\overline{\ker(\varphi\ten_R\id_{R'})})=\overline{\gamma^{-1}(\ker(\varphi\ten_R\id_{R'}))}\sub\overline{\ker((\varphi\ten_R\id_{R'})\circ\gamma)}$$$$\sub\overline{\ker(\psi\circ(\varphi\ten_R\id_{R'})\circ\gamma)}=\overline{\ker(\varphi'\circ\phi)}.$$
		
		Hence, $\Delta'(x\ten_Ry)\in\phi(\overline{\ker(\varphi'\circ\phi)})$. Since $\phi$ is an isomorphism then $$\phi(\overline{\ker(\varphi'\circ\phi)})=\overline{\phi(\ker(\varphi'\circ\phi))}=\overline{\ker\varphi'}.$$
		
		Therefore, $\Delta'(x\ten_Ry)\in\overline{\ker\varphi'}\implies x\ten_Ry \in (A')^*_{B',R'}$, which finishes the proof.
		
	\end{proof}

	The next corollary is in \cite{lipman1}, and here we get it as a straightforward consequence of our previous proposition.
	
	\begin{cor}[Faithfully flat descent]
		Under the notation of \ref{202305171730}, assume that $R'$ is a faithfully flat $R$-algebra. If $A\ten_RR'$ is $R'$-saturated in $B\ten_RR'$ then $A$ is $R$-saturated on $B$.
	\end{cor}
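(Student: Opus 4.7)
The plan is to reduce the descent to Proposition \ref{202305171730} together with the reflecting property of faithful flatness, applied to the $R$-module quotient $A^*_{B,R}/g(A)$. Since Proposition \ref{202305171730} already did the delicate compatibility work between saturation and base change, the remaining argument should be essentially formal.

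First, I would invoke Proposition \ref{202305171730} to obtain
$$A^*_{B,R}\ten_R R' \sub (A\ten_R R')^*_{B\ten_R R',R'}$$
as submodules of $B\ten_R R'$, using flatness of $R'$ to ensure that the canonical map $A^*_{B,R}\ten_R R' \hookrightarrow B\ten_R R'$ is injective, so that the containment really is an inclusion of subsets. Next, I would unravel the hypothesis: the saturatedness of $A\ten_R R'$ in $B\ten_R R'$ says
$$(A\ten_R R')^*_{B\ten_R R',R'} = g'(A\ten_R R'),$$
and, since $g' = g\ten_R \id_{R'}$, this image coincides with $g(A)\ten_R R'$ inside $B\ten_R R'$. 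Combined with the inclusion $g(A)\ten_R R' \sub A^*_{B,R}\ten_R R'$ coming from $g(A)\sub A^*_{B,R}$ and flatness, I would conclude
$$g(A)\ten_R R' = A^*_{B,R}\ten_R R'.$$

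To finish, I would tensor the short exact sequence of $R$-modules $0 \to g(A) \to A^*_{B,R} \to A^*_{B,R}/g(A) \to 0$ with the flat $R$-algebra $R'$, obtaining $(A^*_{B,R}/g(A))\ten_R R' = 0$; faithful flatness of $R'$ then forces $A^*_{B,R}/g(A) = 0$, i.e., $A^*_{B,R} = g(A)$, which is precisely the statement that $A$ is $R$-saturated in $B$. I do not anticipate any real obstacle here, since the substantive technical content (the inclusion $A^*_{B,R}\ten_R R' \sub (A\ten_R R')^*_{B\ten_R R',R'}$) was already established in Proposition \ref{202305171730}, and the remainder is a textbook faithfully-flat descent argument. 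The only point requiring mild care is bookkeeping of which tensor products inject into $B\ten_R R'$, which is handled uniformly by the flatness hypothesis.
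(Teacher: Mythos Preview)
Your proposal is correct and follows essentially the same approach as the paper: invoke Proposition \ref{202305171730} to get $A^*_{B,R}\ten_R R' \sub g(A)\ten_R R'$ via the saturation hypothesis, combine with the reverse inclusion to obtain equality, and then use faithful flatness on the quotient $A^*_{B,R}/g(A)$ to conclude. The only difference is that you are slightly more explicit about the injectivity bookkeeping and the short exact sequence, which the paper leaves implicit.
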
	
	
	\begin{proof}
		Indeed, by hypothesis $(A')^*_{B', R'}=g'(A')=(g\ten_R\id_{R'})(A\ten_RR')=g(A)\ten_RR'$. Proposition \ref{202305171730} implies that $A^*_{B,R}\ten_RR'\sub g(A)\ten_RR'$, and so $$A^*_{B,R}\ten_RR'= g(A)\ten_RR'.$$
		
		Since $R'$ is $R$-flat then $0=\dfrac{A^*_{B,R}\ten_RR'}{g(A)\ten_RR'}=\dfrac{A^*_{B,R}}{g(A)}\ten_RR'$ and the faithfulness of $R'$ implies that $\dfrac{A^*_{B,R}}{g(A)}=0$, i.e, $A^*_{B,R}=g(A)$.
	\end{proof}

	\section{Relative Lipschitz saturation and radicial algebras}

	Let us recall some important definitions before continuing.
	
	\begin{defi}
	Let $h:S\rightarrow T$ be a ring morphism. We say that $T$ is a \textbf{radicial} $S$-algebra if:
	
	\begin{enumerate}
		\item The induced map on spectra $\Spec(h):\Spec T\rightarrow \Spec S$ is injective;
		
		\item For every $\fq\in\spec S$ the embedding $h_{\fq}:\Frac\left(\dfrac{S}{h^{-1}(\fq)}\right)\hookrightarrow \Frac\left(\dfrac{T}{\fq}\right)$ induced by $h$ is purely inseparable.\footnote{$\Frac$ stands for the field of fractions.}
	\end{enumerate}
	
	In this case, we also say that $h$ is a radicial ring morphism. 
	\end{defi}
	
	Recall that if $K$ is a field and $T$ is a ring we can identify $\textrm{Mor}_{\textrm{\textbf{\tiny{Schemes}}}}(\textrm{Spec} K, \textrm{Spec} T)$ as the set $$\{(\fq,\beta)\mid \fq\in\Spec T\mbox{ and }\beta:\Frac\left(\frac{T}{\fq}\right)\rightarrow K\mbox{ is a ring morphism}\}.$$

	We recall some properties of radicial algebras.
	
	\begin{teo}\label{202305262023} The following conditions are equivalent:
	
	\begin{enumerate}
		\item [(a)] $h:S\rightarrow T$ is a radicial ring morphism;
		
		\item [(b)] Any two distinct morphisms of $T$ into a field have distinct composition with $h$;
		
		\item [(c)]  The kernel of the canonical morphism $\gamma: T\ten_ST\rightarrow T$ is a nil ideal of $T\ten_ST$;
		
		\item [(d)] $t\ten_S 1-1\ten_St$ is nilpotent in $T\ten_ST$, $\forall t\in T$.
	\end{enumerate} 
	
	\end{teo}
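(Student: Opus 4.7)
The plan is to establish the chain (c)$\Leftrightarrow$(d), (c)$\Leftrightarrow$(b), (b)$\Leftrightarrow$(a), which closes the loop.

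First I would dispatch (c)$\Leftrightarrow$(d) as essentially formal. Since $\gamma:T\ten_ST\to T$ is (up to the canonical isomorphism $T\ten_TT\cong T$) the quotient map appearing in the first section, the same computation as for $\varphi$ shows $\ker\gamma=\langle t\ten_S1-1\ten_St\mid t\in T\rangle$. Thus if (d) holds, every generator of $\ker\gamma$ lies in the nilradical of $T\ten_ST$, and since the nilradical is an ideal, $\ker\gamma$ is nil; conversely (c) directly gives (d), as $t\ten_S1-1\ten_St\in\ker\gamma$.

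Next I would prove (c)$\Leftrightarrow$(b) via the universal property of the tensor product. For (c)$\Rightarrow$(b), given two morphisms $\varphi_1,\varphi_2:T\to K$ into a field with $\varphi_1\circ h=\varphi_2\circ h$, the pair $(\varphi_1,\varphi_2)$ factors through a unique $S$-algebra morphism $\psi:T\ten_ST\to K$ sending $t_1\ten_St_2\mapsto\varphi_1(t_1)\varphi_2(t_2)$. Then $\psi(t\ten_S1-1\ten_St)=\varphi_1(t)-\varphi_2(t)$; by (d) this element is nilpotent in $K$ hence zero, so $\varphi_1=\varphi_2$. For the converse (b)$\Rightarrow$(c), I argue by contrapositive: if some $t\ten_S1-1\ten_St$ is not nilpotent, there is a prime $\mathfrak P\subset T\ten_ST$ avoiding it; setting $K:=\Frac((T\ten_ST)/\mathfrak P)$, the two compositions $t\mapsto t\ten_S1$ and $t\mapsto 1\ten_St$ (followed by projection to $K$) give distinct $S$-algebra morphisms $T\to K$ with equal precomposition with $h$, violating (b).

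Finally I would tackle (b)$\Leftrightarrow$(a), which I expect to be the technical heart of the proof. Using the identification of morphisms $\Spec K\to\Spec T$ with pairs $(\fq,\beta)$ where $\beta:\Frac(T/\fq)\hookrightarrow K$, condition (b) translates precisely into: the pair $(\fq,\beta)$ is determined by $(h^{-1}(\fq),\beta\circ h_{\fq})$. The injectivity of $\Spec(h)$ corresponds to the uniqueness of $\fq$ from $h^{-1}(\fq)$, and the purely inseparable clause corresponds to the uniqueness of $\beta$ given its restriction to $\Frac(S/h^{-1}(\fq))$. For (a)$\Rightarrow$(b) I invoke the standard fact that for a purely inseparable extension $L/F$ and any two $F$-embeddings $\sigma,\tau:L\to K$, one has $\sigma(x)^{p^n}=\tau(x)^{p^n}\in F$, forcing $\sigma=\tau$ since $K$ is a field. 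For (b)$\Rightarrow$(a), given $\fq_1\neq\fq_2$ both contracting to the same $\fp$, embed the residue fields $\Frac(T/\fq_i)$ into a common algebraic closure so as to agree on the image of $\Frac(S/\fp)$; this produces distinct morphisms $T\to K$ with the same composition with $h$. Similarly, a non-purely-inseparable residue field extension admits two distinct embeddings over $\Frac(S/\fp)$, again contradicting (b).

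The main obstacle I anticipate is the careful bookkeeping in (b)$\Rightarrow$(a): constructing the two distinct embeddings requires choosing a sufficiently large field and matching the embeddings of $\Frac(S/\fp)$, which needs an explicit appeal to Steinitz-style extension of embeddings into an algebraic closure. Once that is set up, everything else reduces to the elementary algebra of purely inseparable extensions.
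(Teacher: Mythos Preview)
Your argument is correct. The main difference from the paper is in how (b)$\Leftrightarrow$(c) is handled: the paper reformulates (c) as surjectivity of the diagonal morphism $\mathbf{\Delta}_{X/S}:X\to X\times_{\Spec S}X$ (where $X=\Spec T$) and then simply cites \cite{EGA}, Prop.~3.7.1, whereas you give a direct, self-contained algebraic proof via the universal property of $T\ten_ST$ and the existence of a prime avoiding a non-nilpotent element. For (a)$\Leftrightarrow$(b), both you and the paper use the identification of $\Mor_{\textrm{\textbf{\tiny{Schemes}}}}(\Spec K,\Spec T)$ with pairs $(\fq,\beta)$; the paper encodes this as a commutative square with bijections $\lambda_S,\lambda_T$ and observes that $h$ is radicial iff the induced map $\Psi$ on these sets is injective, leaving the verification implicit, while you unpack precisely that verification (injectivity of $\Spec(h)$ plus pure inseparability of residue extensions $\Leftrightarrow$ uniqueness of the pair $(\fq,\beta)$ from its image), including the Steinitz-type extension step for (b)$\Rightarrow$(a). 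Your route is more elementary and self-contained; the paper's is terser but leans on an external reference.
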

	
	\begin{proof}
	Let $\hat{h}:\Spec T\rightarrow \Spec S$ be the morphism of schemes induced by $h$ and let $K$ be a field. Then we have a canonical commutative diagram of maps 
	\[\begin{tikzcd}
		{\textrm{Mor}_{\textrm{\textbf{\tiny{Rings}}}}(T,K)} && {\textrm{Mor}_{\textrm{\textbf{\tiny{Rings}}}}(S,K)} \\
		{\textrm{Mor}_{\textrm{\textbf{\tiny{Schemes}}}}(\textrm{Spec} K, \textrm{Spec} T)} && {\textrm{Mor}_{\textrm{\textbf{\tiny{Schemes}}}}(\textrm{Spec} K, \textrm{Spec} S)}
		\arrow["{h^*}", from=1-1, to=1-3]
		\arrow["\Psi"', from=2-1, to=2-3]
		\arrow["{\lambda_T}"', from=1-1, to=2-1]
		\arrow["{\lambda_S}", from=1-3, to=2-3]
	\end{tikzcd}\]
	
	\noindent where \begin{itemize}
		\item $\lambda_T$ and $\lambda_S$ are canonical bijections;
		
		\item $h^*(\delta)=\delta\circ h,\forall \delta\in \textrm{Mor}_{\textrm{\textbf{\tiny{Rings}}}}(T,K)$;
		
		\item $\Psi(\alpha)=\hat{h}\circ\alpha, \forall \alpha\in \textrm{Mor}_{\textrm{\textbf{\tiny{Schemes}}}}(\textrm{Spec} K, \textrm{Spec} T)$;
		
		\item $\Psi(\fq,\beta)=(h^{-1}(\fq),\beta\circ h_{\fq})$, $\forall (\fq,\beta)\in \textrm{Mor}_{\textrm{\textbf{\tiny{Schemes}}}}(\textrm{Spec} K, \textrm{Spec} T)$.
	\end{itemize}
	
	Now, it is clear that $h$ is radicial if and only if $\Psi$ is injective, and since the above diagram commutes and $\lambda_T$ and $\lambda_S$ are bijections, then $\Psi$ is injective if and only if $h^*$ is injective. 
	
	Hence, the equivalence $(a)\iff(b)$ is proved.
	
	The equivalence $(c)\iff(d)$ holds because the kernel of the canonical map $T\ten_ST\rightarrow T$ is generated by the elements of the form $t\ten_S 1-1\ten_St$ is nilpotent in $T\ten_ST$, $\forall t\in T$.
	
	(b)$\iff$(c) In this case it is useful to work with the diagonal map $$\mathbf{\Delta}_{X/S}:X\rightarrow X\times_{\Spec S}X, $$
	
	\noindent where $X:=\Spec T$, which corresponds to the ring morphism $\gamma:T\ten_ST\rightarrow T$. Thus, (b) is equivalent to $\mathbf{\Delta_{X/S}}$ to be surjective (see   \cite{EGA}, Prop. 3.7.1).
	\end{proof}

	\begin{defi}[Unramified algebra]
	Let $T$ be a $S$-algebra and $\gamma:T\ten_ST\rightarrow T$ be the canonical morphism which takes $u\ten_Sv\mapsto uv, \forall u,v\in T$. We say that $T$ is an \textbf{unramified} $S$-algebra if the following conditions hold: 
	
	\begin{enumerate}
		\item $\ker\gamma$ is a finitely generated ideal of $T\ten_ST$;
		
		\item $(\ker\gamma)^2=\ker\gamma$.
	\end{enumerate}
	\end{defi}
	
	Using the determinant trick it is easy to see that condition (2) implies that there exists $e\in \ker\gamma$ such that $\ker\gamma$ is the principal ideal generated by $e$, and $e^2=e$.
	
	\begin{lema}\label{202305261559}
	Let $h:S\rightarrow T$ be a surjective ring morphism, $x\in S$, and let $I$ be an ideal of $S$. If $h(x)\in\overline{h(I)}$ then $x$ is integral over $I\mod \ker h$, i.e, $x+\ker h\in\overline{\left(\dfrac{I}{\ker h}\right)}$.   
	\end{lema}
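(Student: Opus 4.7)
The plan is to unpack the hypothesis into an explicit integral equation in $T$, pull it back to $S$ along $h$, and then read the resulting relation modulo $\ker h$. The first observation to record is that since $h$ is a ring morphism we have the identity $h(I)^i=h(I^i)$ for every $i\geq 1$: the inclusion $h(I^i)\subseteq h(I)^i$ is immediate from $h(a_1\cdots a_i)=h(a_1)\cdots h(a_i)$, and the reverse inclusion follows from the same identity applied to the generators of $h(I)^i$.

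Next, I would expand the hypothesis $h(x)\in\overline{h(I)}$: there exist $n\geq 1$ and elements $b_1,\ldots,b_n\in T$ with $b_i\in h(I)^i$ such that
\begin{equation*}
h(x)^n+b_1\,h(x)^{n-1}+\cdots+b_{n-1}\,h(x)+b_n=0.
\end{equation*}
By the observation above, each $b_i$ can be written as $h(a_i)$ for some $a_i\in I^i$. Since $h$ is a ring morphism, this equality reads $h\bigl(x^n+a_1x^{n-1}+\cdots+a_n\bigr)=0$, so
\begin{equation*}
x^n+a_1x^{n-1}+\cdots+a_{n-1}x+a_n\in\ker h.
\end{equation*}

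Finally, I would pass to the quotient $S/\ker h$. The notation $I/\ker h$ in the statement is shorthand for the image of $I$ in $S/\ker h$, namely $(I+\ker h)/\ker h$; note that $a_i\in I^i\subseteq (I+\ker h)^i$, so $a_i+\ker h\in\bigl((I+\ker h)/\ker h\bigr)^i$. Reducing the displayed relation modulo $\ker h$ yields
\begin{equation*}
(x+\ker h)^n+(a_1+\ker h)(x+\ker h)^{n-1}+\cdots+(a_n+\ker h)=0
\end{equation*}
in $S/\ker h$, which is exactly the required integral dependence of $x+\ker h$ over $I/\ker h$. The argument is essentially the isomorphism theorem $T\cong S/\ker h$ applied to an integral equation; the only mild subtlety to spell out is the identity $h(I)^i=h(I^i)$, so that the coefficients of the equation in $T$ can be lifted in a way that keeps track of the powers of $I$.
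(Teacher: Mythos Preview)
Your proof is correct and follows essentially the same route as the paper's: both lift the integral equation from $T$ back to $S$ via surjectivity of $h$ and then reduce modulo $\ker h$. You are simply more explicit than the paper in justifying why each coefficient $b_i\in h(I)^i$ can be written as $h(a_i)$ with $a_i\in I^i$ (via the identity $h(I)^i=h(I^i)$, which does use surjectivity) and in unpacking the notation $I/\ker h$.
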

	
	\begin{proof}
	By hypothesis there exist $b_i=h(a_i)$, $a_i\in I^i, \forall i\in\{1,\hdots,n\}$, such that $$h(x^n+a_1x^{n-1}+\cdots+a_{n-1}x+a_n)=0.$$
	
	If we denote $\bar{u}:=u+\ker h$, $\forall u\in S$, the last equation implies that $$\bar{x}^n+\overline{a_1}\bar{x}^{n-1}+\cdots+\overline{a_{n-1}}\bar{x}+\overline{a_n}=\bar{0},$$
	
	\noindent with $\overline{a_i}\in\left(\dfrac{I}{\ker h}\right)^i$, $\forall i\in \{1,\hdots,n\}$, which finishes the proof. 
	\end{proof}
	
	Before we continue, let us prove a useful lemma to proceed with this section.
	
	\begin{lema}\label{202305261817}
	Let $R$ be an ideal and $I, J_1,\hdots,J_n$ ideals of $R$ where the product $J_1\cdots J_n$ is a nil ideal of $R$. If $x\in R$ is integral over $I\mod J_i$, $\forall i\in\{1,\hdots,n\}$ then $x\in\overline{I}$.
	\end{lema}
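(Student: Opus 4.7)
The plan is to produce, for each $i$, a monic ``integral-type'' relation for $x$ that lies inside $J_i$, then multiply these relations together to get a single monic polynomial $q(X) \in R[X]$ whose coefficients still lie in the appropriate powers of $I$ and with $q(x) \in J_1 \cdots J_n$. Since the product ideal is nil by hypothesis, $q(x)$ is nilpotent, and raising the relation $q(x)^m = 0$ to the suitable power will produce a genuine integral equation of $x$ over $I$.

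More precisely, by hypothesis there exist integers $n_i \geq 1$ and elements $c_{i,k} \in I^k$ for $k=1,\dots,n_i$ such that
\[
p_i(x) \;:=\; x^{n_i} + c_{i,1}x^{n_i-1} + \cdots + c_{i,n_i} \;\in\; J_i,\qquad i=1,\dots,n.
\]
Set $N := n_1+\cdots+n_n$ and $q(X) := p_1(X)\cdots p_n(X) \in R[X]$. Then $q$ is monic of degree $N$, and a direct inspection of the multiplication shows that the coefficient of $X^{N-k}$ in $q$ is a sum of products $c_{1,k_1}\cdots c_{n,k_n}$ with $k_1+\cdots+k_n=k$ (with the convention $c_{i,0}=1$); since $c_{i,k_i} \in I^{k_i}$, each such product lies in $I^{k_1+\cdots+k_n} = I^k$. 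Moreover $q(x) = p_1(x)\cdots p_n(x) \in J_1\cdots J_n$, which is a nil ideal by hypothesis, so there exists $m \geq 1$ with $q(x)^m = 0$.

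It remains to show that the identity $q(x)^m = 0$ is, after expansion, an integral dependence relation for $x$ over $I$. Writing $q(X) = X^N + \sum_{k=1}^N d_k X^{N-k}$ with $d_k \in I^k$, the multinomial expansion of $q(X)^m$ yields
\[
q(X)^m \;=\; \sum_{j_0+j_1+\cdots+j_N = m} \binom{m}{j_0,\dots,j_N} d_1^{j_1}\cdots d_N^{j_N}\, X^{Nm - (j_1 + 2j_2 + \cdots + N j_N)}.
\]
For a term contributing to $X^{Nm-k}$ we have $j_1+2j_2+\cdots+Nj_N = k$, so the associated coefficient $d_1^{j_1}\cdots d_N^{j_N}$ lies in $I^{j_1}\cdot I^{2j_2}\cdots I^{Nj_N} = I^{k}$. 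Thus $q(X)^m$ is monic of degree $Nm$ with the coefficient of $X^{Nm-k}$ in $I^k$ for every $k \geq 1$, and the equation $q(x)^m = 0$ is exactly a monic integral relation of $x$ over $I$, proving $x \in \overline{I}$.

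The only mildly delicate point is bookkeeping of coefficient degrees in $I$ under both the product and the $m$-th power, but in both cases the multiplicativity $I^a \cdot I^b \subseteq I^{a+b}$ together with a weight count by the variable's exponent makes the verification essentially mechanical; no obstacle arises as long as one is careful to treat the leading term weight as $0$.
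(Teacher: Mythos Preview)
Your proof is correct and follows exactly the same route as the paper: construct monic polynomials $p_i$ witnessing integrality modulo each $J_i$, multiply them, use nilpotence of the product ideal to get $(p_1\cdots p_n)(x)^r=0$, and read off an integral equation over $I$. The only difference is that you spell out the coefficient bookkeeping (that the coefficient of $X^{N-k}$ in the product, and then in its $m$-th power, lies in $I^k$), whereas the paper simply asserts ``Therefore, $x\in\overline{I}$'' and leaves that verification to the reader.
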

	
	\begin{proof}
	For each $i\in \{1,\hdots,n\}$ there exists a monic polynomial $p_i\in R[X]$, with suitable coefficients related to the integral dependence over $I\mod J_i$, such that $p_i(x)\in J_i$. Setting $p:=p_1\cdots p_n$, one has $p(x)\in J_1\cdots J_n\sub\sqrt{(0)}$ which ensures the existence of an $r\in\bN$ such that $(p(x))^r=0$. Therefore, $x\in \overline{I}$.
	\end{proof}
	
	In the following result, Lipman demonstrated that a radicial base change does not alter the relative Lipschitz saturation.
	
	\begin{prop}[\cite{lipman1}]
	Consider the diagram of ring morphisms 
	\[\begin{tikzcd}
		R & {R'} & A & B
		\arrow["g", from=1-3, to=1-4]
		\arrow["{\tau'}", from=1-2, to=1-3]
		\arrow["\lambda", from=1-1, to=1-2]
		\arrow["{\tau:=\tau'\circ\lambda}"', curve={height=18pt}, from=1-1, to=1-3]
	\end{tikzcd}\]
	\end{prop}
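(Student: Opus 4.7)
The plan is to show $A^*_{B,R}=A^*_{B,R'}$ under the hypothesis that $\lambda:R\to R'$ is radicial. Let $\Delta:B\to B\otimes_R B$ and $\Delta':B\to B\otimes_{R'}B$ be the diagonals, and let $\varphi:B\otimes_R B\to B\otimes_A B$ and $\varphi':B\otimes_{R'}B\to B\otimes_A B$ be the two canonical maps. Base change along $\lambda$ yields a canonical surjective ring morphism $\pi:B\otimes_R B\twoheadrightarrow B\otimes_{R'}B$ satisfying $\pi\circ\Delta=\Delta'$ and $\varphi=\varphi'\circ\pi$.

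One inclusion is free of the radicial hypothesis. Applying Lemma \ref{202305101554} to the commutative square formed by $\varphi$, $\varphi'$, $\pi$, and $\mathrm{id}_{B\otimes_A B}$ gives $\pi(\overline{\ker\varphi})\subseteq \overline{\ker\varphi'}$. Hence if $x\in A^*_{B,R}$, then $\Delta'(x)=\pi(\Delta(x))\in\overline{\ker\varphi'}$, i.e.\ $x\in A^*_{B,R'}$.

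For the opposite inclusion the key observation is that $\ker\pi$ is a \emph{nil} ideal of $B\otimes_R B$. Indeed, $\ker\pi$ is the ideal generated by the elements $g\tau'(r')\otimes_R 1-1\otimes_R g\tau'(r')$ as $r'$ ranges over $R'$, since for any $r'\in R'$ and $b_1,b_2\in B$ one has $b_1 r'\otimes_R b_2 - b_1\otimes_R r' b_2 = (b_1\otimes_R b_2)(g\tau'(r')\otimes_R 1-1\otimes_R g\tau'(r'))$. By Theorem \ref{202305262023}(d), radicialness of $\lambda$ makes each $r'\otimes_R 1-1\otimes_R r'$ nilpotent in $R'\otimes_R R'$; applying the ring morphism $R'\otimes_R R'\to B\otimes_R B$ induced by $g\circ\tau'$, each generator of $\ker\pi$ is nilpotent in $B\otimes_R B$. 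Since the nilradical of a commutative ring is an ideal, $\ker\pi$ is nil.

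Now take $x\in A^*_{B,R'}$. Then $\pi(\Delta(x))=\Delta'(x)\in \overline{\ker\varphi'}=\overline{\pi(\ker\varphi)}$, so Lemma \ref{202305261559} applied to the surjective morphism $\pi$ gives that $\Delta(x)$ is integral over $\ker\varphi$ modulo $\ker\pi$. Since $\ker\pi$ is nil, Lemma \ref{202305261817} with $n=1$ yields $\Delta(x)\in\overline{\ker\varphi}$, i.e.\ $x\in A^*_{B,R}$. The main obstacle is the conceptual step of translating the radicial hypothesis into the nilpotence of $\ker\pi$; once this is in hand, the two auxiliary lemmas on integral dependence modulo a nil ideal close the argument immediately.
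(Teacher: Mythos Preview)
Your argument for part (a) and for the radicial half of part (b) is correct and is essentially the paper's proof specialized to the radicial case: in the paper's notation your $\pi$ is their $\psi$, and when $R'$ is radicial the paper takes the idempotent $e'=0$, so that $\ker\psi\subseteq\sqrt{(0)}$ and the auxiliary ideal $J=(1-e)$ is the whole ring; Lemma~\ref{202305261817} then collapses to your $n=1$ application. Your direct verification that $\ker\pi$ is nil via Theorem~\ref{202305262023}(d) is exactly the content of the paper's step (i).

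However, there is a genuine gap: you have not addressed the \emph{unramified} case at all, and your method does not extend to it. When $R'$ is an unramified $R$-algebra, the diagonal kernel $\ker\gamma\subseteq R'\otimes_R R'$ need not be nil; it is generated by an idempotent $e'$ with $(e')^2=e'$. Consequently $\ker\pi$ (the ideal of $B\otimes_R B$ generated by the image of $\ker\gamma$) is generated by an idempotent $e$ and is not nil in general, so Lemma~\ref{202305261817} with $n=1$ and $J_1=\ker\pi$ fails. The paper handles this by introducing the complementary ideal $J:=(1-e)$, observing that $(\ker\pi)\cdot J\subseteq\sqrt{(e)}(1-e)=(0)$ is nil, and then showing that for $x\in A^*_{B,R'}$ the element $\Delta(x)$ is integral over $\ker\varphi$ modulo $\ker\pi$ (your argument) \emph{and} modulo $J$ (since $\Delta(x)-\Delta(x)e=\Delta(x)(1-e)\in J$ with $\Delta(x)e\in\ker\pi\subseteq\ker\varphi$). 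Lemma~\ref{202305261817} with $n=2$, $J_1=\ker\pi$, $J_2=J$ then gives $\Delta(x)\in\overline{\ker\varphi}$. You should either supply this argument or explicitly restrict your claim to the radicial case.
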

	Then: 
	\begin{enumerate}
	\item [a)] $A^*_{B,R}\sub A^*_{B,R'}$;
	
	\item [b)] If $R'$ is a radicial or unramified $R$-algebra then $A^*_{B,R}=A^*_{B,R'}$.
	\end{enumerate}
	
	\begin{proof}
	(a) By hypothesis the diagram 
	\[\begin{tikzcd}
		R & A & B \\
		{R'} & A & B
		\arrow["\lambda"', from=1-1, to=2-1]
		\arrow["{\textrm{id}_A}"', from=1-2, to=2-2]
		\arrow["{\textrm{id}_B}", from=1-3, to=2-3]
		\arrow["\tau", from=1-1, to=1-2]
		\arrow["g", from=1-2, to=1-3]
		\arrow["{\tau'}"', from=2-1, to=2-2]
		\arrow["g"', from=2-2, to=2-3]
	\end{tikzcd}\] \noindent is commutative. Now Proposition \ref{prop3} implies that $A^*_{B,R}=\id_B(A^*_{B,R})\sub A^*_{B,R'}$.
	
	(b) Let $\gamma:R'\ten_RR'\rightarrow R'$ be the canonical morphism. Let us prove that in either case there exists $e'\in \ker\gamma$ such that $e'^2=e'$ and $\ker\gamma\sub \sqrt{(e')}$.
	
	(i) Assuming that $R'$ is a radicial $R$-algebra, the previous theorem says that $\ker\gamma$ is a nil ideal of $R'\ten_RR'$. In this case, take $e':=0$.
	
	(ii) Suppose that $R'$ is an unramified $R$-algebra. As we observed, there exists $e'\in \ker\gamma$ such that $e'^2=e'$ and $\ker\gamma=(e')$. In particular, $\ker\gamma\sub\sqrt{(e')}$, and our claim is proved.
	
	Now, before to prove another inclusion, let us prepare the way. Consider the commutative diagram 
	\[\begin{tikzcd}
		{R'\underset{R}{\otimes}R'} && {A\underset{R}{\otimes}A} && {B\underset{R}{\otimes}B} && {B\underset{A}{\otimes}B} \\
		&& B && {B\underset{R'}{\otimes}B}
		\arrow["{\tau'\underset{R}{\otimes}\tau'}", from=1-1, to=1-3]
		\arrow["{g\underset{R}{\otimes}g}", from=1-3, to=1-5]
		\arrow["\varphi", from=1-5, to=1-7]
		\arrow["{\Delta'}"', from=2-3, to=2-5]
		\arrow["{\varphi'}"', from=2-5, to=1-7]
		\arrow["\psi"', from=1-5, to=2-5]
		\arrow["\Delta"{pos=0.4}, curve={height=6pt}, from=2-3, to=1-5]
	\end{tikzcd}\]\noindent where $\psi$ is the canonical morphism, and set $\sigma:=(g\ten_Rg)\circ(\tau'\ten_R\tau')$. We already know that $\ker\psi$ is the ideal of $B\ten_RB$ by the image of $\Delta\circ g\circ\tau'$. It is straightforward to conclude that $$\Delta(g(\tau'(b)))=\sigma(b\ten_R1_{R'}-1_{R'}\ten_Rb), \forall b\in R',$$
	
	\noindent and since $\ker\gamma$ is generated by $\{b\ten_R1_{R'}-1_{R'}\ten_Rb\mid b\in R'\}$ then $\ker\psi$ is the ideal of $B\ten_RB$ generated by $\sigma(\ker\gamma)$. In particular, $e:=\sigma(e')\in\ker \psi$ and $e^2=e$. Further, since $\ker\gamma\sub\sqrt{(e')}$ then $\ker\psi\sub\sqrt{(e)}$. Setting $J$ as the ideal of $B\ten_RB$ generated by $1-e$. Thus, once $e(1-e)=0$, it follows that  $$(\ker\psi)J\sub\sqrt{(e)}(1-e)=(0),$$\noindent and consequently, $(\ker\psi)J$ is a nil ideal of $B\ten_RB$. Since $\psi$ is surjective and $\varphi=\varphi'\circ\psi$ then $\psi(\ker\varphi)=\ker\varphi'$. 
	
	Finally, let us check the inclusion $A^*_{B,R'}\sub A^*_{B,R}$. Taking any $x\in A^*_{B,R'}$, one has $$\psi(\Delta(x))=\Delta'(x)\in\overline{\ker\varphi'}=\overline{\psi(\ker\varphi)},$$
	
	\noindent and Lemma \ref{202305261559} implies that $\Delta(x)$ is integral over $\ker\varphi\mod \ker\psi$. Besides, since $e\in \ker\psi$ then $c:=\Delta(x)e\in \ker\psi$, and clearly $\ker\psi\sub\ker\varphi$, hence $c\in \ker\varphi$. Notice that $$\Delta(x)-c=\Delta(x)(1-e)\in J,$$
	
	\noindent which implies that $\Delta(x)$ is integral over $\ker\varphi\mod J$. Since $(\ker\varphi)J$ is a nil ideal, Lemma \ref{202305261817} ensures that $\Delta(x)\in\overline{\ker\varphi}$, and therefore $x\in A^*_{B,R}$.  
	\end{proof}
	
	Before presenting the proof of the main theorem of this work, let us establish some auxiliary results first.
	
	\begin{lema}\label{202305261918}
	Let $h:S\rightarrow T$ be a ring morphism, $x\in S$, and suppose that $\ker h$ is a nil ideal of $S$. If $h(x)\in\sqrt{(0_T)}$ then $x\in \sqrt{(0_S)}$.
	\end{lema}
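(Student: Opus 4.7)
The plan is straightforward: unwind the two nilpotency hypotheses one after the other. The only ingredient needed is that $h$ is a ring morphism, so powers pass through it.

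First I would use the hypothesis $h(x)\in\sqrt{(0_T)}$ to extract an integer $n\in\bN$ such that $h(x)^n=0_T$. Since $h$ is a ring morphism, this rewrites as $h(x^n)=0_T$, which exactly says that $x^n\in\ker h$.

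Next I would invoke the hypothesis that $\ker h$ is a nil ideal of $S$: every element of $\ker h$ is nilpotent, so there exists $m\in\bN$ with $(x^n)^m=0_S$, i.e., $x^{nm}=0_S$. This immediately gives $x\in\sqrt{(0_S)}$, completing the proof.

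There is no real obstacle here; the argument is a two-line chase and is genuinely just a bookkeeping lemma being recorded for use in the upcoming main theorem (likely to transfer nilpotency of $\Delta(x)$ across a quotient map modulo a nil ideal). The only thing to be careful about is that $\ker h$ being a \emph{nil} ideal (every element nilpotent) is weaker than being \emph{nilpotent} (a uniform power vanishes); fortunately, only the pointwise version is needed, applied to the single element $x^n$.
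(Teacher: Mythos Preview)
Your argument is correct and is essentially identical to the paper's: both pick $n$ with $h(x^n)=0_T$, note $x^n\in\ker h\subseteq\sqrt{(0_S)}$, and conclude $x\in\sqrt{(0_S)}$. The only cosmetic difference is that the paper phrases the last step as $x\in\sqrt{\sqrt{(0_S)}}=\sqrt{(0_S)}$ rather than extracting an explicit exponent $m$.
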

	
	\begin{proof}
	By hypothesis there exists $r\in\bN$ such that $(h(x))^r=0_T$, i.e, $h(x^r)=0_T$. So, $x^r\in\ker h\sub\sqrt{(0_S)}$, which implies that $x\in\sqrt{\sqrt{(0_S)}}=\sqrt{0_S}$.
	\end{proof}
	
	In the next lemma, we observe that we do not need to require the kernel of $h$ to be a nil ideal in order to guarantee that $\ker(h\ten_Rh)$ is a nil ideal, as in \cite{lipman1}.
	
	\begin{lema}\label{202305241953}
	Let $h:S\rightarrow T$ be an integral morphism of $R$-algebras. 
	
	\begin{enumerate}
		\item [a)] $\ker(h\ten_R h)$ is a nil ideal of $S\ten_RS$;
		
		\item [b)] Suppose that $\ker h$ is a nil ideal of $S$. Then $\overline{I}=h^{-1}(\overline{IT})$, for every $I$ ideal of $S$.
	\end{enumerate}
	\end{lema}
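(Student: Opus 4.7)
My plan is to prove part (a) first and to use it in the proof of part (b). Both halves hinge on integrality of $h$, together with the preceding lemmas on integral dependence modulo an ideal (Lemmas \ref{202305261559} and \ref{202305261817}).

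For (a), I would begin by writing $h\tens_R h$ as the composition
\[
S\tens_R S \xrightarrow{\id_S\tens_R h} S\tens_R T \xrightarrow{h\tens_R \id_T} T\tens_R T,
\]
in which each arrow is a base change of $h$ (along $R\to S$ and $R\to T$ respectively). Since integrality is preserved under base change, both arrows---and hence $h\tens_R h$ itself---are integral morphisms of rings. To deduce nilpotence of the kernel, I would take $z\in\ker(h\tens_R h)$, write it as a finite sum $\sum s_i\tens s_i'$ of simple tensors, and reduce to the situation in which $T$ is replaced by a finitely generated $h(S)$-subalgebra $T_0\sub T$ large enough to accommodate both $z$ and the witnessing data. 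Then $T_0$ is finite over $h(S)$, and $T_0\tens_R T_0$ becomes a finite $S\tens_R S$-module via $h\tens_R h$; a determinant-trick (Cayley--Hamilton) argument, combined with $(h\tens_R h)(z)=0$ and the tensor structure of $h\tens_R h$ as two successive base changes of $h$, should then force $z$ to be nilpotent in $S\tens_R S$.

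For (b), the inclusion $\overline{I}\sub h^{-1}(\overline{IT})$ is immediate by applying $h$ to any monic relation witnessing $x\in\overline{I}$. The reverse inclusion is the subtle direction. Given $x\in S$ with $h(x)\in\overline{IT}$, take a defining equation $h(x)^n+\sum_{i=1}^n b_i h(x)^{n-i}=0$ in $T$ with $b_i\in(IT)^i=I^iT$. Expanding each $b_i=\sum_j a_{ij}t_{ij}$ with $a_{ij}\in I^i$ and $t_{ij}\in T$, I would collect all the $t_{ij}$ into a common finite $h(S)$-submodule of $T$ and apply a determinant-trick to produce a monic polynomial with coefficients in $S$ (the $i$-th coefficient in $I^i$) whose image under $h$ vanishes. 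Lemma \ref{202305261559} then promotes this to $x+\ker h$ being integral over $I/\ker h$ in $S/\ker h$, and since $\ker h$ is nil by hypothesis, Lemma \ref{202305261817} applied with the single nil ideal $J_1=\ker h$ yields $x\in\overline{I}$ in $S$.

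The main obstacle is part (a): integrality of a ring morphism does not by itself force its kernel to be a nil ideal, so the argument cannot simply cite ``integrality of $h\tens_R h$''. It must make essential use of the specific tensor structure of $h\tens_R h$ as a double base change, and the delicate step is converting the Cayley--Hamilton identity---which a priori only controls the action of $z$ on $T_0\tens_R T_0$---into a genuine nilpotence relation in $S\tens_R S$ itself, without assuming faithfulness of the module or flatness of the relevant base changes.
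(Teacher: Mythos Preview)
For part (a) your approach has a genuine gap—precisely the one you flag at the end. Cayley--Hamilton applied to the action of $z\in\ker(h\ten_R h)$ on a finite module $T_0\ten_R T_0$ produces a monic relation for $z$ only \emph{as an endomorphism of that module}; when the action is zero this degenerates to a tautology and says nothing about $z$ as an element of $S\ten_R S$ unless the module is faithful, which there is no reason to expect (and an integral morphism can certainly have non-nil kernel, e.g.\ any surjection $S\twoheadrightarrow S/J$ with $J$ not nil). The paper takes an entirely different route: it shows that the induced map on spectra $(h\ten_R h)^\sharp:\Spec(T\ten_R T)\to\Spec(S\ten_R S)$ is surjective, whence $\ker(h\ten_R h)$ lies in every prime and is nil. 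Given $\fp\in\Spec(S\ten_R S)$, embed $(S\ten_R S)/\fp$ into an algebraically closed field $F$; composing with the two canonical maps $S\rightrightarrows S\ten_R S$ gives two homomorphisms $S\to F$, and since $h$ is integral and $F$ is algebraically closed, each of these extends along $h$ to a homomorphism $T\to F$. The universal property of $\ten_R$ then reassembles them into $\beta:T\ten_R T\to F$ with $(h\ten_R h)^{-1}(\ker\beta)=\fp$. The tensor structure is exploited here through the decomposition of a map out of $S\ten_R S$ into a pair of maps out of $S$, each extendable along $h$ independently—not through module-finiteness.

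Your plan for part (b), by contrast, is essentially the paper's argument. The paper phrases the descent step via Rees algebras—$h(x)X$ is integral over $T[(IT)X]$, which is integral over $h(S)[h(I)X]$ since $h$ is integral, giving $a_i\in I^i$ with $h(x^n+a_1x^{n-1}+\cdots+a_n)=0$—and then observes directly that $x^n+a_1x^{n-1}+\cdots+a_n\in\ker h\sub\sqrt{(0)}$, so some power of it vanishes and $x\in\overline{I}$. Your packaging via Lemmas \ref{202305261559} and \ref{202305261817} is an equivalent way to record the same step.
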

	
	\begin{proof}
	(a) We want to show that $\ker(h\ten_Rh)\sub\bigcap\limits_{\fp\in\tiny{\mbox{Spec}(S\ten_RS)}}\fp$, so it suffices to show that the induced map $(h\ten_Rh)^\sharp:\spec(T\ten_RT)\rightarrow \spec(S\ten_RS)$ is surjective. 
	
	Let $\fp\in\spec(S\ten_RS)$. The domain $\dfrac{S\ten_RS}{\fp}$ can be embedded into an algebraically closed field $F$ such that the kernel of the composition $$S\ten_RS\overset{\mbox{\tiny{projection}}}{\longrightarrow}\dfrac{S\ten_RS}{\fp}\hookrightarrow F$$
	
	\noindent is $\fp$. Let $\alpha$ be this composition and let $\gamma_1,\gamma_2:S\rightarrow S\ten_RS$ be the canonical maps which takes $s\ten_R1\overset{\gamma_1}{\mapsfrom}s\overset{\gamma_2}{\mapsto}1\ten_Rs$, for all $s\in S$. Since $h$ is an integral morphism then there exist ring morphisms $\delta_1,\delta_2:T\rightarrow F$ for which the diagram 
	\[\begin{tikzcd}
		T \\
		S \\
		& {S\underset{R}{\otimes}S} && F \\
		S \\
		T
		\arrow["h"', from=2-1, to=1-1]
		\arrow["{\gamma_1}"', from=2-1, to=3-2]
		\arrow["{\gamma_2}", from=4-1, to=3-2]
		\arrow["h"', from=4-1, to=5-1]
		\arrow["\alpha", from=3-2, to=3-4]
		\arrow["{\delta_1}"', dashed, from=1-1, to=3-4]
		\arrow["{\delta_2}"', dashed, from=5-1, to=3-4]
	\end{tikzcd}\]\noindent commutes. The universal property of the tensor product ensures the existence of a unique ring morphism $\beta:T\ten_RT\rightarrow F$ such that $\beta(u\ten_Rv)=\delta_1(u)\delta_2(v)$, $\forall u,v\in T$. Defines $\fq:=\ker\beta\in\spec(T\ten_RT)$. Clearly $\alpha=\beta\circ(h\ten_Rh)$, and since $\fp=\ker\alpha$ then we conclude that $(h\ten_Rh)^{-1}(\fq)=\fp$. 
	
	(b) The persistence of the integral closure of ideals implies $h(\overline{I})\sub\overline{IT}$. Conversely, assume that $x\in h^{-1}(\overline{IT})$. Thus, $y:=h(x)$ is integral over $IT$, and then $yX$ is integral over $T[(IT)X]$, which is integral over $h(S)[h(I)X]$, once $h$ is an integral morphism. Thus, for each there exist $a_i\in I^i$, $i\in\{1,\hdots, n\}$ such that $$h(a_n)+\cdots+h(a_1)y^{n-1}+y^n=0.$$ 
	
	\noindent Hence, $a_n+\cdots+a_1x^{n-1}+x^n\in \ker h\sub \sqrt{(0)}$, which implies the existence of $r\in\bN$ such that $$(a_n+\cdots+a_1x^{n-1}+x^n)^r=0.$$
	
	Therefore, $x\in\overline{I}$.
	
	\end{proof}
	
	Finally, we present the main theorem of this work, where Lipman showed that relative Lipschitz saturation always gives rise to a radical algebra.

	\begin{teo}[\cite{lipman1}]
	Consider the sequence of ring morphisms $R \rightarrow A \overset{g}{\rightarrow} B$, and suppose that $g$ is an integral morphism. Then $A^{*}_{B,R}$ is a radicial $A$-algebra.
	\end{teo}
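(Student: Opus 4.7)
The plan is to verify condition (d) of Theorem \ref{202305262023} for the induced structure morphism $A\to A^*_{B,R}$: namely, to show that for every $x\in A^*_{B,R}$ the element $x\tens_A 1-1\tens_A x$ is nilpotent in $A^*_{B,R}\tens_A A^*_{B,R}$. So I fix $x\in A^*_{B,R}$ and write $h:A^*_{B,R}\hookrightarrow B$ for the inclusion, which is what provides the bridge between the ambient tensor product $B\tens_A B$ and the one we actually want to control.

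The first step is to push the information we have in $B\tens_R B$ forward to $B\tens_A B$. Since $\Delta(x)\in\overline{\ker\varphi}$, there is an equation
\[\Delta(x)^n+c_1\Delta(x)^{n-1}+\cdots+c_n=0\]
with $c_i\in(\ker\varphi)^i\subseteq\ker\varphi$. Applying $\varphi$ annihilates every $c_i$, leaving $\varphi(\Delta(x))^n=0$; and since $\varphi(\Delta(x))=x\tens_A 1-1\tens_A x$ in $B\tens_A B$, this says exactly that $(h\tens_A h)(x\tens_A 1-1\tens_A x)$ is nilpotent.

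The second step is to lift this nilpotency back along $h\tens_A h$. Because $g$ is integral and $g(A)\subseteq A^*_{B,R}\subseteq B$, the ring $B$ is integral over $A^*_{B,R}$, so $h$ is an integral morphism of $A$-algebras. Lemma \ref{202305241953}(a), applied with base ring $A$ in place of $R$, yields that $\ker(h\tens_A h)$ is a nil ideal of $A^*_{B,R}\tens_A A^*_{B,R}$. Setting $u:=x\tens_A 1-1\tens_A x$, we have $(h\tens_A h)(u^n)=(h\tens_A h)(u)^n=0$ by the first step, so $u^n$ lies in the nil ideal $\ker(h\tens_A h)$, making $u^n$ — and therefore $u$ — nilpotent. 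This establishes Theorem \ref{202305262023}(d) and so finishes the proof.

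I do not anticipate a real obstacle in this argument. The only bookkeeping subtlety is making sure that the tensor products are taken over $A$ (viewing both $A^*_{B,R}$ and $B$ as $A$-algebras) rather than over $R$, so that the map $h\tens_A h$ really lands in the ring $B\tens_A B$ where our first-step nilpotency lives, and so that Lemma \ref{202305241953}(a) applies with the correct base. Everything else is a two-step chase that is routine once the integrality of the inclusion $A^*_{B,R}\hookrightarrow B$ has been noted.
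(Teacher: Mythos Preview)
Your proof is correct and follows essentially the same approach as the paper's: both reduce via Theorem~\ref{202305262023} to showing each $x\tens_A 1-1\tens_A x$ is nilpotent, push the integral equation for $\Delta(x)$ through $\varphi$ to get nilpotency in $B\tens_A B$, and then use Lemma~\ref{202305241953}(a) (applied over $A$) to lift nilpotency back along the inclusion-induced map $A^*_{B,R}\tens_A A^*_{B,R}\to B\tens_A B$. The only cosmetic difference is that the paper packages your final ``$u^n$ lies in a nil ideal, hence $u$ is nilpotent'' step as a separate Lemma~\ref{202305261918}, whereas you argue it inline.
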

	
	\begin{proof}
	By Theorem \ref{202305262023} we have to check if the kernel of the canonical morphism $$\gamma:A^{*}_{B,R}\ten_AA^{*}_{B,R}\rightarrow A^{*}_{B,R}$$ \noindent is a nil ideal of $A^{*}_{B,R}\ten_AA^{*}_{B,R}$. Once $\ker\gamma$ is the ideal generated by $\{x\ten_A1_B-1_B\ten_Ax\mid x\in A^{*}_{B,R}\}$, it suffices to show that $\delta(x):=x\ten_A1_B-1_B\ten_Ax\in\sqrt{(0_{A^{*}_{B,R}\ten_AA^{*}_{B,R}})}, \forall x\in A^*_{B,R}$. Before to take care of it, observe that since $g$ is an integral morphism then $B$ is integral over $g(A)$, and we already know that $g(A)\sub A^{*}_{B,R}$, hence $B$ is integral over $A^{*}_{B,R}$. Consequently, the inclusion $\iota: A^{*}_{B,R}\hookrightarrow B$ is an integral morphism of $A$-algebras. By Lemma \ref{202305241953}\footnote{Notice that to apply this lemma as we have done, we do not have to be concerned whether $\ker\iota$ is a nil ideal or not.} we can conclude that the kernel of the map $\iota\ten_A\iota:A^{*}_{B,R}\ten_AA^{*}_{B,R}\rightarrow B\ten_AB$ is a nil ideal of $A^{*}_{B,R}\ten_AA^{*}_{B,R}$.
	
	Finally, let us check what remains. If $x\in A^{*}_{B,R}$ then $\Delta(x)\in\overline{\ker\varphi}$, and there exist $a_i\in(\ker\varphi)^i, i\in\{1,\hdots,n\}$ such that $$(\Delta(x))^n+a_1(\Delta(x))^{n-1}+\cdots+a_{n-1}\Delta(x)+a_n=0_{B\ten_AB}.$$ 
	
	Applying $\varphi$ in the last equation we obtain $(\varphi(\Delta(x)))^n=0_{B\ten_AB}$. It is easy to see that $(\iota\ten_A\iota)(\delta(x))=\varphi(\Delta(x))$, hence $(\iota\ten_A\iota)(\delta(x))\in\sqrt{(0_{B\ten_AB})}$. Since $\ker(\iota\ten_A\iota)$ is a nil ideal then by Lemma \ref{202305261918} we conclude that $\delta(x)\in \sqrt{(0_{A^{*}_{B,R}\ten_AA^{*}_{B,R}})}$, which ends the proof.	
	\end{proof}

\bibliographystyle{plain}
\bibliography{referencias}

\end{document}